\newif\ifcolorcomments
\newcommand{\allowcomments}[4]{
\newcommand{#1}[1]{\ifdraft{\ifcolorcomments{\textcolor{#4}{##1 --#3}}\else{\textsl{ ##1 \ --#3}}\fi}\else{}\fi}
}
\allowcomments{\comAB}{Ayreena}{A}{magenta}
\allowcomments{\comCC}{Carlo}{C}{orange}
\def\bc{\begin{center}}
\def\ec{\end{center}}
\def\be{\begin{equation}}
\def\ee{\end{equation}}
\def\N{\mathbb N}
\def\Q{\mathbb Q}
\def\R{\mathbb R}
\newcommand{\matr}[4]{
\left( \begin{array}{cc} #1 & #2 \\ #3 & #4 \end{array} \right)}
\newcommand{\0}{\mathbf 0}
\newcommand{\tb}{\Tilde{\beta}}
\newtheorem{lem}{Lemma}[section]
\newtheorem{dfn}[lem]{Definition}
\newtheorem{pro}[lem]{Proposition}
\newtheorem{thm}[lem]{Theorem}
\newtheorem{rem}[lem]{Remark}
\numberwithin{equation}{section}
\DeclareMathOperator\sign{sgn}
\newif\ifdraft\drafttrue
\title{Regularity properties of the $\alpha$-Wilton functions}
\author{Ayreena Bakhtawar
\thanks{A.B. acknowledges the support by Centro di Ricerca Matematica Ennio de
Giorgi, Scuola Normale Superiore di Pisa and UniCredit Bank R\&D division for financial support  under the project `Dynamics and Information Research Institute-Quantum Information (Teoria dell’Informazione), Quantum Technologies'. 
 A.B. would like to thank Centro di Giorgi for the excellent working conditions and the research travel support.
}
\and
Carlo Carminati
\thanks{
 C.C. is partially supported by the PRIN Grant 2022NTKXC of the Ministry of University and Research (MUR), Italy. C.C. acknowledges the support of the MIUR Excellence Department Project awarded to the Department of Mathematics, University of Pisa, CUP I57G22000700001.}
\and
Seul Bee Lee
\thanks{
S.L. is supported by the Institute for Basic Science (IBS-R003-D1).} 
}
\newcommand{\Addresses}{{
  \bigskip
  \footnotesize

  A.~Bakhtawar, \textsc{Centro di Ricerca Ennio De Giorgi, Scuola Normale Superiore, Piazza dei Cavalieri 3,
56126 Pisa, Italy}\par\nopagebreak
  \textit{E-mail address}, A.~Bakhtawar: \texttt{ayreena.bakhtawar@sns.it}

  \medskip

  C.~Carminati, \textsc{Dipartimento di Matematica, University di Pisa, Largo Bruno Pontecorvo 5, 56127
Pisa, Italy}\par\nopagebreak
  \textit{E-mail address}, C.~Carminati: \texttt{carlo.carminati@unipi.it}

  \medskip

  S.B.~Lee, \textsc{Center for Geometry and Physics, Institute for Basic Science (IBS), Pohang 37673, Korea }\par\nopagebreak
  \textit{E-mail address}, S.B.~Lee: \texttt{seulbee.lee@ibs.re.kr}

}
}
\begin{document}
\date{}
\maketitle

\begin{abstract}
The aim of this article is to study the regularity properties of the Wilton functions $W_\alpha$ associated with $\alpha$-continued fractions. We prove that the Wilton function is BMO for $\alpha\in[1-g,g]$ (where $g:=\frac{\sqrt{5}-1}{2}$ denotes the golden number), and we show that this result is optimal, since we find that on any left neighbourhood of $1-g$ and on any right neighbourhood of $g$ there are values $\alpha$ for which $W_\alpha$ is not BMO; the proof of this latter negative results exploits a special feature of the family of $\alpha$-continued fractions called ``matching''.
Our results complete those of Marmi--Moussa--Yoccoz (1997)  and of  Lee--Marmi--Petrykiewicz--Schindler (2024), where it is proven that  Wilton function is BMO for, respectively,  $\alpha=1/2$ (\cite{MaMoYo_97}) and $\alpha \in[\frac{1}{2},g]$ (\cite{LeMar_24}).
 
\end{abstract}

\section{Introduction}


For $0\leq \alpha \leq 1$, let 
 $\bar{\alpha}=\max(\alpha,1-\alpha)$; the \emph{$\alpha$-continued fraction expansion} of a real number $x\in (0,\bar{\alpha})$ is associated to the iteration of the map $A_{\alpha}:(0,\bar{\alpha})\to[0,\bar{\alpha}]$ defined as follows:
 \begin{equation}\label{Amap}
 A_{\alpha}(x)=\left\vert\frac{1}{x}-\left[  \frac{1}{x}+1-\alpha\right]\right\vert,\; 
 \end{equation}
 where $[\cdot]$ denotes the integer part.
 
The family of maps $\{A_{\alpha}\}_\alpha$ was introduced by Nakada in \cite{Na81}, and as special cases it includes the standard continued fraction map when $\alpha=1,$ the nearest-integer continued fraction map when $\alpha=\frac{1}{2}$ and the by-excess continued fractions map when $\alpha=0$. For all $\alpha\in(0,1]$, these maps are expanding and admit a unique absolutely continuous invariant probability measure $d_{\mu_{\alpha}}=\rho_{\alpha}(x)dx$ whose density is bounded from above and below by a constant dependent on $\alpha.$ In the case $\alpha=0$, there is an indifferent fixed point and $A_{\alpha}$ does not have a finite invariant density but it preserves the infinite measure $d_{\mu_{0}}(x)=\frac{dx}{1-x}.$

The \emph{Wilton function} associated with an $\alpha$-continued fraction is defined as follows on $\mathbb{R} \setminus \mathbb{Q}$ 
\begin{equation}\label{aWFun}
W_{\alpha}(x)=\sum^{\infty}_{j=0} (-1)^j \beta_{\alpha,j-1}(x)\log x^{-1}_{\alpha,j}=\sum^{\infty}_{j=0} (-1)^j\beta_{\alpha,j-1}(x)\log(1/A_{\alpha}^{j}(x_{\alpha,0})),
\end{equation}
where the sequence  $x_{\alpha,n}=A^n_{\alpha}(x_{\alpha,0})$
with $x_{\alpha,0}=|x-\lfloor x+1-\bar{\alpha}\rfloor|$ and
$\beta_{\alpha,n}=x_{\alpha,0}x_{\alpha,1}\cdots x_{\alpha,n}$ for $n\geq 0,$
$\beta_{\alpha,-1}=1$.
When we consider $\alpha=1$, then $A_{1}$ is simply the Gauss map; in this case we shall often omit the dependence on $\alpha$, and write $x_n, \beta_n$ rather than $x_{\alpha,n}, \beta_{\alpha,n}$.

Note that the formula \eqref{aWFun} defines an $L^1(0,1)$ function which satisfies the functional equation
\begin{equation*}\label{BFE}
\begin{split}
& W_{\alpha}(x)=-\log(x)-xW_{\alpha}(A_{\alpha}(x))  ~~\text{ for  all } x\in(0,\bar{\alpha})\setminus\mathbb Q,\\
& W_{\alpha}(x)=W_{\alpha}(1-x) \qquad\qquad\qquad~\text{ for all }x\in(0,\min\{\alpha,1-\alpha\})\setminus\mathbb Q,
\end{split}
\end{equation*}
and more generally,
\begin{equation} \label{gen}
W_{\alpha}(x)=W_{\alpha}^{(K)}(x)+(-1)^{K+1}\beta_{\alpha,K}(x)W_{\alpha}(A_{\alpha}^{K+1}(x)) \quad (K\in \N, x\in(0,\bar{\alpha})\setminus \Q),
\end{equation}
where $W^{(K)}$ denotes the partial sum 
\begin{equation}\label{PS}
W_{\alpha}^{(K)}(x)=\sum_{j=0}^{K}(-1)^j \beta_{\alpha,j-1}{(x)}\log(1/A_{\alpha}^{j}(x))
\end{equation}
with respect to the $\alpha$-continued fraction.
 
The series $\eqref{aWFun}$ was first introduced by Wilton \cite{Wi_33} for $\alpha=1$ in order the study of trigonometric series 
\begin{equation} \label{TS}
    \phi_{1}(x)=-\frac{1}{\pi}\sum_{n\geq1}\frac{\tau(n)}{n}\sin(2\pi n x),
\end{equation}
where $\tau(n)$ is the number of divisors of the natural number $n.$
Indeed, the author showed that the series \eqref{TS} converges if and only if $W_1$ is convergent.
The series \eqref{aWFun} defining $W_1$ as
\begin{equation}\label{WFun}
W_1(x)=\sum^{\infty}_{j=0} (-1)^j\beta_{j-1}(x)\log x^{-1}_{j}.
\end{equation}

We will refer to the irrational real numbers $x$ for which the series \eqref{aWFun} converges as
the \emph{Wilton numbers}. It can be proved that the  series \eqref{WFun} converges if and only if it fulfills the \emph{Wilton condition}
\begin{equation*}
\left\vert  \sum_{j=0}^{\infty}  (-1)^{j}  \frac{\log(q_{j+1}(x))} {q_{j}(x)}    \right\vert<\infty,
\end{equation*}
  where $q_{j}$ denotes the  denominator of the $j$th convergent of $x$ associated with the Gauss map $A_1$.

All Diophantine numbers, i.e. $ x\in\R\setminus\Q$ 
 such that $q_{n+1}=O(q_{n}^{1+\tau})$ where $\tau\geq 0,$ are Wilton numbers. 
Note that the Wilton function \eqref{WFun} is an alternating sign version of the Brjuno function, introduced by Yoccoz in 1988, which plays an important role in the theory of dynamical systems, more precisely
in the study of iteration of a quadratic polynomials (for more details on the Brjuno function, see \cite{Mar_90, MaMoYo_01, MaMoYo_06}). 

Clearly, all Brjuno numbers are Wilton numbers but not vice versa. Whereas the Hausdorff dimension of the difference set is $0$, i.e. $\mathrm{dim}_H(\mathcal W \setminus \mathcal B)=0$, where $\mathcal W$ and $\mathcal B $ denote the set of Wilton and Brjuno numbers respectively. This follows from the fact that  $(\mathcal W\setminus \mathcal B)\subset (\R \setminus \mathcal B)=\mathcal B^{c},$ and the Hausdorff dimension of the set $\mathcal B^{c}$ is $0$ as it is properly contained in the union of the set of Liouville numbers and the set of rational numbers. 

 In recent years, Balazard--Martin 
\cite{BaMa_19} studied the Wilton function $W_1$ in terms of its convergence properties and in the context of the Nyman and Beurling criterion \cite{BaMa_12,BaMa_13}. For example, in \cite{BaMa_13}, the authors reduced the study of the autocorrelation function to that of the Wilton function $W_{1}$ in order to show that the points of differentiability of the autocorrelation function $\mathcal{A}(\lambda)=\int_{0}^{\infty}\{ t\}\{\lambda t \}\frac{dt}{t^2}$ are the positive irrational numbers such that the series $\sum_{j\geq 0}(-1)^{j+1}\frac{\log q_{j+1}}{q_{j}}$ converges.


%

The aim of this paper is to study BMO regularity properties of Wilton functions associated with $\alpha$-continued fractions for $\alpha\in(0,1).$
In \cite{MaMoYo_97}, Marmi--Moussa--Yoccoz proved that $W_{1/2}$ is in BMO. 
Recently, the third author together with Marmi, Petrykiewicz and Schindler \cite{LeMar_24} improved this result by studying the regularity properties of Wilton function. In particular, they showed in \cite{LeMar_24} that $W_\alpha\in \mathrm{BMO}$ for all $\alpha \in [1/2,g]$, where $g=\frac{\sqrt 5 -1}{2}$. The aim of this article is to further 
improve this result of Lee--Marmi--Petrykiewicz--Schindler by extending the interval of $\alpha$. 

\begin{figure}
    \centering
    \includegraphics[width=0.8\linewidth]{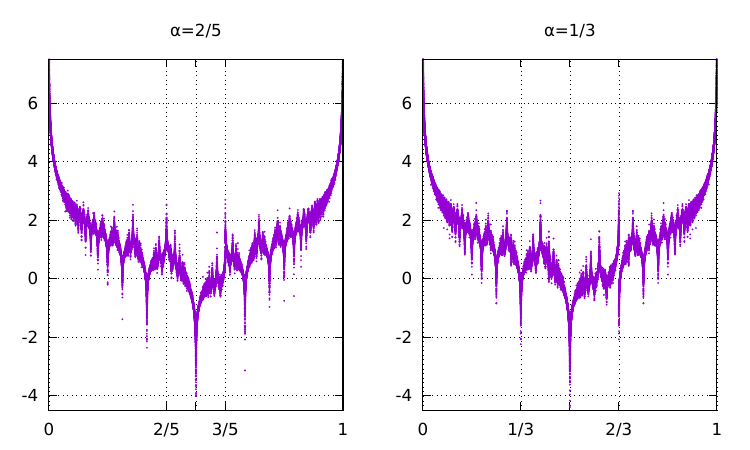}
    \caption{The graph of $W_{2/5}$ and $W_{1/3}$: the first is in BMO, the latter isn't (as one might guess observing the ``blow up'' at $x=2/3$).}
    \label{fig:W25vsW13}
\end{figure}

Our first main result is as follows:
\begin{thm}\label{main}
The Wilton function $W_\alpha\in \mathrm{BMO}$ for all $\alpha \in [1-g,g]$.
\end{thm}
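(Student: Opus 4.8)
The plan is to isolate one uniform estimate that both explains the range $[1-g,g]$ and drives the whole proof, and then to reduce the genuinely new half $[1-g,\tfrac12)$ to the range $[\tfrac12,g]$ of \cite{LeMar_24} by the duality $\alpha\leftrightarrow 1-\alpha$. The uniform estimate is the following elementary observation: on $[1-g,g]$ one has $\bar\alpha=\max(\alpha,1-\alpha)\le g$, and since $A_\alpha$ maps $(0,\bar\alpha)$ into $[0,\bar\alpha]$, every digit satisfies $x_{\alpha,j}\le\bar\alpha\le g$; hence $\beta_{\alpha,K}(x)=\prod_{j=0}^{K}x_{\alpha,j}\le g^{\,K+1}$ pointwise, uniformly in $\alpha$ and $x$. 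This bound is exactly what characterizes the interval, since $\bar\alpha\le g$ if and only if $\alpha\in[1-g,g]$, and it makes the remainder term in the general functional equation \eqref{gen} decay geometrically.

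First I would run the cylinder estimate for the mean oscillation of $W_\alpha$. On a depth-$K$ cylinder $I$ on which $A_\alpha^{K+1}$ is a bijection onto $(0,\bar\alpha)$, equation \eqref{gen} splits $W_\alpha$ into the partial sum $W_\alpha^{(K)}$ and the remainder $(-1)^{K+1}\beta_{\alpha,K}\,W_\alpha\circ A_\alpha^{K+1}$. The oscillation of the smooth part $W_\alpha^{(K)}$ over $I$ is controlled by a finite sum of $\log$-type terms composed with branches of bounded distortion, while the oscillation of the remainder is $\lesssim\beta_{\alpha,K}\le g^{\,K+1}$ times the mean oscillation of $W_\alpha$ over the whole interval (using that $A_\alpha^{K+1}|_I$ has bounded distortion and that $W_\alpha\in L^1(0,\bar\alpha)$). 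Because the densities $\rho_\alpha$ are bounded above and below one may pass freely between Lebesgue measure and $\mu_\alpha$ and compare the measure of each cylinder to its length, with uniform constants. The geometric factor $g^{\,K+1}$ then makes the resulting self-referential bound close, uniformly for $\alpha\in[\tfrac12,g]$; this reproduces the argument of \cite{LeMar_24}, now with all constants depending only on $g$.

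To cover the new half $[1-g,\tfrac12)$ I would use the duality $\alpha\leftrightarrow 1-\alpha$. Here $\bar\alpha=\overline{1-\alpha}$ and the initial reduction $x_{\alpha,0}=|x-\lfloor x+1-\bar\alpha\rfloor|$ is literally the same for $\alpha$ and for $1-\alpha$, so $A_\alpha$ and $A_{1-\alpha}$ act on the common interval $(0,\bar\alpha)$ and $W_\alpha,W_{1-\alpha}$ start from the same point $x_{\alpha,0}=x_{1-\alpha,0}$. Writing $A_\alpha(y)=|1/y-n_\alpha(y)|$ with $n_\alpha(y)=\lfloor 1/y+1-\alpha\rfloor$, one checks $n_{1-\alpha}(y)-n_\alpha(y)\in\{0,1\}$ for all $y$ (since $2\alpha-1\in[0,1)$), so the two expansions coincide except where a single rounding unit differs, and there they are related by one singularization step. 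Since for $\alpha\in[1-g,\tfrac12)$ one has $1-\alpha\in(\tfrac12,g]$, the cylinder estimate above already applies to $A_{1-\alpha}$; transporting it to $A_\alpha$ along this digit-by-digit comparison, and using the reflection symmetry $W_\alpha(x)=W_\alpha(1-x)$ on the extra region $(0,\alpha)$ left uncovered by the $\ge\tfrac12$ dynamics, yields $W_\alpha\in\mathrm{BMO}$ up to a correction term $D=W_\alpha-W_{1-\alpha}$ which satisfies a cohomological equation of the same shape as the defining relation $W_\alpha(x)=-\log x-x\,W_\alpha(A_\alpha(x))$, with a bounded source term built from $\log(1/\,\cdot\,)$ and the weights $\beta$.

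The main obstacle I anticipate is precisely the control of this correction $D$. One must track how the shifted rounding digit reorganizes the \emph{alternating} Wilton series---each singularization flips the parity of the index---and show that the accumulated discrepancy has bounded mean oscillation rather than being merely finite pointwise. The uniform bound $\beta_{\alpha,K}\le g^{\,K+1}$ is again what rescues this step, since it forces the reorganized tail to contract geometrically; getting a clean scale-invariant estimate, uniform in $\alpha$ across the whole plateau $[1-g,g]=[g^2,g]$, is the delicate part. It is also the mechanism that ties the result to the critical value $g$: beyond $g$ one loses $\bar\alpha\le g$, the contraction degrades, and (as the companion negative result shows via matching) $W_\alpha$ can fail to be $\mathrm{BMO}$.
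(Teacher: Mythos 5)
Your high-level strategy --- take $[\tfrac12,g]$ from \cite{LeMar_24} and reduce $[1-g,\tfrac12)$ to it by comparing $W_\alpha$ with a Wilton function of parameter $\ge\tfrac12$ --- is essentially the paper's strategy (the paper compares with the fixed function $W_{1/2}$ rather than with $W_{1-\alpha}$, and uses that BMO is stable under adding an $L^\infty$ function). But the proposal stops exactly where the real work begins: you name the control of the correction $D=W_\alpha-W_{1-\alpha}$ as ``the main obstacle'' and ``the delicate part'' and offer only the hope that $\beta_{\alpha,K}\le g^{K+1}$ makes the reorganized tail contract. That is not a proof, and the contraction of $\beta$ alone cannot do the job: the same geometric decay $\beta_{\alpha,n}\le\bar\alpha\,g^{\,n}$ holds for every $\alpha\in(g,1]$ (Proposition \ref{prop} gives rate $\rho_\alpha=g$ there), yet $W_\alpha$ fails to be BMO for all rational $\alpha$ in that range (Theorem \ref{optimal}). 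So the dichotomy at $g$ is not ``loss of contraction''; it is the parity of the matching index, i.e.\ whether the sign in front of the remainder flips across the singular parameter --- the mechanism of Theorem \ref{singclass} and Lemma \ref{BMOfailure}. Your closing heuristic therefore misidentifies why $[1-g,g]$ is the right interval.

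Concretely, what is missing is the content of Section \ref{quater}. After one rounding discrepancy the two orbits do not re-synchronize after ``one singularization step'': they enter an excursion $(\mathrm B)\to(\mathrm C)\to\cdots\to(\mathrm C)\to(\mathrm D)$ of unbounded length (governed by which interval $(t_m,t_{m-1}]$, $t_m\downarrow 1-g$, the orbit point falls into), and one must prove the uniform denominator estimate $|q_i-q_i'|\le q_{i-1}$ along the whole excursion (Proposition \ref{pr:q_i}). Combined with Proposition \ref{pr:unif} (which replaces $W_\alpha$ by $\sum_n(-1)^n\log q_{n+1}/q_n$ up to a uniformly bounded error) and Hurwitz's bound $q_i'/q_{i+1}'\le g$, this yields $\|W_\alpha-W_{1/2}\|_\infty<\infty$ uniformly in $\alpha\in[1-g,\tfrac12]$. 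None of this machinery appears in your proposal, so the claim that $D$ has bounded mean oscillation is unsupported; note also that you would have to redo the analogue of Proposition \ref{pr:q_i} for the moving pair $(\alpha,1-\alpha)$ rather than the fixed pair $(\alpha,\tfrac12)$. Two smaller points: for $\alpha<\tfrac12$ one has $2\alpha-1<0$, so the correct statement is $n_\alpha(y)-n_{1-\alpha}(y)\in\{0,1\}$, not the reverse; and the paper does not re-derive the $[\tfrac12,g]$ case by a cylinder estimate --- it simply cites \cite{LeMar_24} --- so your first paragraph, while plausible in outline, is itself only a sketch and is not needed.
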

We will also show that this result is optimal:
\begin{thm}\label{optimal}
\begin{enumerate}
\item[\rm(i)] If $\alpha \in (g,1]\cap \mathbb{Q}$, then $W_\alpha$ is not in $\mathrm{BMO}$.
\item[\rm(ii)] There exists a sequence $(u_m)_m$ of rational values, $u_m \uparrow 1-g$ as $m \to +\infty$ such that $W_{u_m}$ is not in $\mathrm{BMO}$.
\end{enumerate}
\end{thm}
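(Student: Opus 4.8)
The strategy is to prove non-membership in $\mathrm{BMO}$ by producing, for each $\alpha$ in the two families, a single point $\omega=\omega(\alpha)$ at which $W_\alpha$ carries an \emph{antisymmetric} logarithmic singularity. Concretely, I aim to show that as $t\to0^{+}$ one has $W_\alpha(\omega+t)=a_{+}\log(1/t)+R_{+}(t)$ and $W_\alpha(\omega-t)=a_{-}\log(1/t)+R_{-}(t)$, with remainders $R_{\pm}$ negligible in the integrated ($L^1$) sense and with $a_{+}\neq a_{-}$. The point is the elementary dichotomy that $\log(1/|t|)\in\mathrm{BMO}$ while $\sign(t)\log(1/|t|)\notin\mathrm{BMO}$: on $I_\delta=(\omega-\delta,\omega+\delta)$ the odd part has vanishing mean, so $(W_\alpha)_{I_\delta}=O(1)$, whence $\frac{1}{|I_\delta|}\int_{I_\delta}|W_\alpha-(W_\alpha)_{I_\delta}|\geq \tfrac12|a_{+}-a_{-}|\cdot\frac1\delta\int_0^\delta\log(1/t)\,dt-O(1)=\tfrac12|a_{+}-a_{-}|\log(1/\delta)+O(1)\to\infty$. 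A single such point therefore suffices.

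To find $\omega$ I would take the right endpoint $\omega=\bar\alpha=\max(\alpha,1-\alpha)$. The folding $x_{\alpha,0}=|x-\lfloor x+1-\bar\alpha\rfloor|$ shows that the expansion read off to the left of $\bar\alpha$ is the $\alpha$-continued fraction of $\bar\alpha=p/q$, while the one read off to the right is that of $\underline\alpha:=1-\bar\alpha=(q-p)/q$; crucially the two rationals share the denominator $q$. Both expansions are finite, of lengths $M$ (for $\bar\alpha$) and $N$ (for $\underline\alpha$), so $A_\alpha^{M}(\bar\alpha)=A_\alpha^{N}(\underline\alpha)=0$, and the determinant identity $p_{k-1}q_k-p_kq_{k-1}=\pm1$ for $\alpha$-convergents gives $\beta_{\alpha,M-1}(\bar\alpha)=\beta_{\alpha,N-1}(\underline\alpha)=1/q$. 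Feeding this into \eqref{gen} with $K=M-1$ (respectively $N-1$): the partial sum \eqref{PS} converges to a finite value as $x\to\bar\alpha^{\mp}$, since the first $M-1$ (resp. $N-1$) remainders stay bounded away from $0$, while the last term, whose argument tends to $0$, contributes the singular part via $W_\alpha(y)=-\log y+O(1)$ as $y\to0^{+}$ (a consequence of $W_\alpha(y)=-\log y-yW_\alpha(A_\alpha y)$). This yields $a_{-}=(-1)^{M}/q$ and $a_{+}=(-1)^{N}/q$. As the magnitudes coincide, the singularity is antisymmetric exactly when $M\not\equiv N\pmod2$; in that case $a_{+}=-a_{-}\neq0$ and $W_\alpha\notin\mathrm{BMO}$, whereas equal parity gives only a harmless symmetric logarithm.

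It remains to secure the parity $M\not\equiv N\pmod2$, and this is where matching enters. For $\alpha\in(g,1)\cap\mathbb{Q}$ it is uniform and elementary: the quantity $1/\alpha+1-\alpha$ decreases from $2$ to $1$ on $(g,1)$, so $\lfloor 1/\alpha+1-\alpha\rfloor=1$ and $A_\alpha(\bar\alpha)=A_\alpha(\alpha)=(1-\alpha)/\alpha$; and because $1/\underline\alpha=1/A_\alpha(\bar\alpha)+1$, the map $A_\alpha$ sends $A_\alpha(\bar\alpha)$ and $\underline\alpha$ to the same value, so the two orbits merge with $\bar\alpha$ exactly one step behind and $M=N+1$. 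This is matching with exponent difference $1$; it degenerates precisely at $\alpha=g$, where the orbit of $\bar\alpha=g$ lands on the fixed point $g^2=1-g$ instead of reaching $0$, which is what pins the left endpoint of the BMO interval. The Gauss case $\alpha=1$ is treated separately and more simply: $\bar\alpha=1$ is not interior, but every rational $p/q\in(0,1)$ has its two regular expansions $[0;a_1,\dots,a_k]$ and $[0;a_1,\dots,a_k-1,1]$, of lengths differing by $1$ and both with $\beta=1/q$, so the antisymmetric mechanism applies at $p/q$ itself. For part (ii) I would take the Fibonacci rationals $u_m=F_{2m}/F_{2m+2}$, which increase to $1-g$ from below; here $\bar\alpha=F_{2m+1}/F_{2m+2}\downarrow g$, and the self-similar (renormalization) structure at the golden boundary yields, by induction on $m$, orbits of lengths $M=m+1$ and $N=m$, again of opposite parity.

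The \textbf{main obstacle} is twofold. Analytically, one must show that the remainders $R_\pm$—the tail $-yW_\alpha(A_\alpha y)$ near $y=0$ together with the convergence error of the partial sum—are genuinely negligible, so that the antisymmetric logarithm is not cancelled; this is delicate because $W_\alpha$ is unbounded, and the control must be in $L^1$ over the shrinking intervals rather than pointwise. Combinatorially, the harder point is part (ii): establishing the parity of the matching exponents along a sequence accumulating at $1-g$, where $M,N\to\infty$ and the one-step argument of part (i) is unavailable, so that one must exploit the renormalization of matching intervals near the golden value. I expect the combinatorial analysis at $1-g$—the feature called matching in the abstract—to be where the real work lies, with part (i) and the BMO-violation estimate being comparatively routine.
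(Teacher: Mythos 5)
Your strategy is essentially the one the paper follows: detect an antisymmetric logarithmic singularity of $W_\alpha$ at a rational point, tie the antisymmetry to the opposite parity of the lengths of the two one-sided orbits (i.e.\ to an odd matching index), and conclude non-membership in BMO; your sequence $u_m=F_{2m}/F_{2m+2}$ is exactly the paper's $u_m=[0;2,1^{2m-1}]$, and your one-step orbit-merging argument on $(g,1)$ is the matching condition of index $-1$ on that interval. So the route is the same, but the two points you yourself flag as the ``main obstacles'' are genuine gaps in the proposal as written, and it is worth seeing how the paper closes them.

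First, the analytic gap. Your expansion $W_\alpha(\omega\pm t)=a_\pm\log(1/t)+R_\pm(t)$ cannot be established pointwise (as you note, $-yW_\alpha(A_\alpha y)$ is unbounded near every rational), and the paper avoids asymptotic expansions altogether: it defines singularity types A and B purely through the one-sided integrated averages $\frac{1}{|h|}\int_\xi^{\xi+h}w$, proves in Lemma~\ref{Wzero} that $\int_0^x\tilde W_\alpha=-x\log x+x+o(x)$ (using only $\tilde W_\alpha\circ T_\alpha\in L^1$), and propagates the type through the functional equation~\eqref{gen} by Lemma~\ref{coc}, an integration-by-parts/change-of-variables argument for $\beta(x)w(\phi(x))$. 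Lemma~\ref{BMOfailure} then derives the BMO failure from a type A singularity without ever isolating a remainder term: one picks an interval $[x^-,x^+]$ around $\xi$ on which $w$ has mean zero and bounds the oscillation from below directly. This is weaker than, and more robust than, the precise expansion you aim for.

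Second, the combinatorial gap in (ii). You propose an induction via renormalization to show the orbit lengths at $\bar u_m$ are $m+1$ and $m$; this is not carried out, and it is the crux of the claim. The paper does not need a new induction: it invokes the result of \cite{CT12} that the matching exponents of a matching interval are read off the even-length continued fraction expansion of its pseudocenter, $n=\sum_{j\text{ even}}a_j$ and $m=\sum_{j\text{ odd}}a_j$. For $u_m=[0;2,1^{2m-1}]$ this gives exponents $(m,m+1)$, hence matching index $+1$, and Theorem~\ref{singclass}(ii) (odd index $\Rightarrow$ type A at $\alpha$) plus Lemma~\ref{BMOfailure} finish the proof. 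If you want a self-contained argument you would still have to verify that each $u_m$ is a pseudocenter and compute its exponents, which amounts to reproving that piece of \cite{CT12}; citing it is the intended shortcut.
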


It is interesting to point out that all these results (both in the positive and negative direction) are strictly linked with a remarkable feature of $\alpha$-continued fractions called {\em matching}; the relevance of this property was first pointed out by \cite{NaNa_08} in relation to the study of the entropy of $\alpha$-continued fractions, and it lead to several results in this field (see \cite{CT12}).
In fact, the technique we use to prove our main result can be adapted to prove, in a simple way, that the entropy of $\alpha$-continued fractions is constant on the interval $[1-g,g]$ (see appendix \ref{apx}).

\section{Notations and preliminary results }\label{notation}

\subsection{Folded $\alpha$-continued fractions}
Fix $\alpha\in(0,1],$ let $\bar{\alpha}=\max(\alpha,1-\alpha)$ and consider the map $A_{\alpha}:[0,\bar{\alpha}]\to[0,\bar{\alpha}]$ be the transformation of $\alpha$-continued fraction
 defined by $A_\alpha(0)=0$ and
 \begin{equation}\label{Amap}
 A_{\alpha}(x)=\left\vert\frac{1}{x}-\left[  \frac{1}{x}\right]_{\alpha}\right\vert, \; \; 
 \end{equation}
 for $x\in(0,\bar\alpha]$, where 
 $[x]_{\alpha}=[x+1-\alpha]$ and $\left[ \cdot \right]$ denotes the integer part.
  Put
$x_{\alpha,0}=|x-[x]_{\bar{\alpha}}|,$ $a_{\alpha,0}=[x]_{\bar{\alpha}},$     $\epsilon_{\alpha,0}(x)=\sign{(x-[x]_{\bar{\alpha}})}$ and define by recurrence for $n\geq 0:$
\begin{align*}
x_{\alpha,n+1}&=
A_{\alpha}(x_{\alpha,n}),~
a_{\alpha,n+1}(x)=\left[ \frac{1}{x_{\alpha,n}}\right]_{\alpha} \text{ and }
\epsilon_{\alpha,n+1}=\sign\left(  \frac{1}{x_{\alpha,n}}-\left[ \frac{1}{x_{\alpha,n}}\right]_{\alpha}  \right).
\end{align*}
The $\alpha$-continued fraction expansion of $x$ is
$$x = a_{\alpha,0}+\dfrac{\epsilon_{\alpha,0}}{a_{\alpha,1}+\dfrac{\epsilon_{\alpha,1}}{\ddots+\dfrac{\epsilon_{\alpha,n-1}}{a_{\alpha,n}+\dfrac{\epsilon_{\alpha,n}}{\ddots}}}}.$$
Let $\frac{p_{\alpha,n}}{q_{\alpha,n}}$ be the $n$th finite truncation of this expansion, that is,
\begin{equation}\label{eq:pc}\frac{p_{\alpha,n}}{q_{\alpha,n}} = a_{\alpha,0}+\dfrac{\epsilon_{\alpha,0}}{a_{\alpha,1}+\dfrac{\epsilon_{\alpha,1}}{\ddots+\dfrac{\epsilon_{\alpha,n-1}}{a_{\alpha,n}}}}.
\end{equation}
It is called the $n$th \emph{convergent} of $x$.
Let $p_{\alpha,-1}=1$, $q_{\alpha,-1}=0$ for the convenience. 

Thanks to the isomorphism between   $2\times 2$ matrices and fractional transformations the following notation will be useful
\begin{equation*}
\begin{pmatrix} a & b \\ c & d\end{pmatrix} \cdot x =\frac{ax+b}{cx+d}.
\end{equation*}
Then equation \eqref{eq:pc} induces, for $n\geq 1$, 
\begin{equation}\label{matrix}
\begin{pmatrix} p_{\alpha,n-1} & p_{\alpha,n}\\ q_{\alpha,n-1} & q_{\alpha,n}\end{pmatrix} 
= \begin{pmatrix}1&a_{\alpha,0}\\0&1\end{pmatrix}
\begin{pmatrix} 0  & \epsilon_{\alpha,0} \\ 1 & a_{\alpha,1} \end{pmatrix} 
\begin{pmatrix} 0  & \epsilon_{\alpha,1} \\ 1 & a_{\alpha,2} \end{pmatrix}
\cdots\begin{pmatrix} 0  & \epsilon_{\alpha,n-2} \\ 1 & a_{\alpha,n-1} \end{pmatrix}\begin{pmatrix} 0  & \epsilon_{\alpha,n-1} \\ 1 & a_{\alpha,n} \end{pmatrix}.
\end{equation}
By applying the above matrices on the point $\epsilon_{\alpha,n}x_{\alpha,n}$, we have $$\frac{p_{\alpha,n}+\epsilon_{\alpha,n}p_{\alpha,n-1}x_{\alpha,n}}{q_{\alpha,n}+\epsilon_{\alpha,n}q_{\alpha,n-1}x_{\alpha,n}}=x.$$
By calculating the determinant of the matrices of \eqref{matrix}, it is immediate that for $n\geq1$
\begin{equation}\label{eq:det}
p_{\alpha,n} q_{\alpha,n-1} - q_{\alpha,n} p_{\alpha,n-1}
= (-1)^{n} \epsilon_{\alpha,0}\epsilon_{\alpha,1} \epsilon_{\alpha,2} \cdots \epsilon_{\alpha,n-1}.
\end{equation}
Thus the convergents of $x$ satisfy the following recursive relation:
\begin{equation}\label{qrecurrence}
 p_{\alpha,n}=a_{\alpha,n}
p_{\alpha,n-1}+\epsilon_{\alpha,n-1}p_{\alpha,n-2}, \ \ \ 
q_{\alpha,n}=a_{\alpha,n} 
q_{\alpha,n-1}+\epsilon_{\alpha,n-1}q_{\alpha,n-2}.
\end{equation}
%
It follows 
\begin{equation}\label{eq:x-p/q}
x-\frac{p_{\alpha,n}}{q_{\alpha,n}}=\frac{p_{\alpha,n}+\epsilon_{\alpha,n}p_{\alpha,n-1}x_{\alpha,n}}{q_{\alpha,n}+\epsilon_{\alpha,n}q_{\alpha,n-1}x_{\alpha,n}} -\frac{p_{\alpha,n}}{q_{\alpha,n}}
=\frac{(-1)^{n+1} \epsilon_{\alpha,0}\epsilon_{\alpha,1} \epsilon_{\alpha,2} \cdots \epsilon_{\alpha,n-1}\epsilon_{\alpha,n}x_{\alpha,n}}{q_{\alpha,n}(q_{\alpha,n}+\epsilon_{\alpha,n}q_{\alpha,n-1}x_{\alpha,n})}
\end{equation}
and 
\begin{equation*}
\sign\left(x-\frac{p_{\alpha,n}}{q_{\alpha,n}}\right)=\sign(q_{\alpha,n}x-p_{\alpha,n})=(-1)^{n+1}\epsilon_{\alpha,0}\epsilon_{\alpha,1} \epsilon_{\alpha,2} \cdots \epsilon_{\alpha,n}.
\end{equation*}

Define
$\beta_{\alpha,n}:=\prod^{n}_{i=0}x_{\alpha,i}=\prod^{n}_{i=0}A^{i}_{\alpha}(x_{\alpha,0})$ for $n\geq0$  as the product of the iterates along the $A_{\alpha}$-orbit with $\beta_{\alpha,-1}=1.$ 
From \cite[Lemma 1]{LuMaNaNa_10}, for all $n\geq 1$ we have $\beta_{\alpha,n}=|q_{\alpha,n}x-p_{\alpha,n}|$.
By definition, $x_{\alpha,n}=\frac{\beta_{\alpha,n}}{\beta_{\alpha,n-1}}$.
Combining with \eqref{eq:x-p/q}, we have
\begin{align*}
\beta_{\alpha,n}=\frac{\beta_{\alpha,n+1}}{x_{\alpha,n+1}}=\frac{1}{q_{\alpha,n+1}+q_{\alpha,n}\epsilon_{\alpha,n+1}x_{\alpha,n+1}}.
\end{align*}
Since $q_{\alpha,n+1}>q_{\alpha,n}>0$ (\cite[Lemma 1]{MoCaMa_99}) and $\epsilon_{\alpha,n+1}x_{\alpha,n+1}=\frac{1}{x_{\alpha,n}}-[\frac{1}{x_{\alpha,n}}]_\alpha\in[\alpha-1,\alpha)$,
for $\alpha>0$, we have 
\begin{equation*}
\frac{1}{1+\alpha}<\beta_{\alpha,n}q_{\alpha,n+1}<\frac{1}{\alpha}.
 \end{equation*}
\begin{pro}[{\cite[Lemma 3]{MoCaMa_99}}]\label{prop}
 Let $\alpha>0$ and $\bar{\alpha}=\max(\alpha,1-\alpha).$ Then for all $n\geq1$ one has 

$$ \begin{array}{ccc}
    \beta_{\alpha,n}&\leq& \bar{\alpha} \rho^{n}_{\alpha},    \\
      1/q_{\alpha,n+1} &<& (1+\alpha)\bar{\alpha}\rho^{n}_{\alpha}, 
 \end{array}\text{
where }
\rho_{\alpha}=\begin{cases}
    g & g<\alpha\leq 1,\\
    \sqrt{2}-1 & \sqrt{2}-1 \leq \alpha \leq g,\\
    \sqrt{1-2\alpha}&0<\alpha<\sqrt{2}-1. 
\end{cases}$$


 \end{pro}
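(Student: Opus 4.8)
The plan is to establish the decay estimate $\beta_{\alpha,n}\le\bar\alpha\,\rho_\alpha^n$ first, and then read off the bound on $1/q_{\alpha,n+1}$ essentially for free. Indeed, the chain of inequalities established just before the statement gives $\beta_{\alpha,n}q_{\alpha,n+1}>\tfrac{1}{1+\alpha}$, whence $\tfrac{1}{q_{\alpha,n+1}}<(1+\alpha)\beta_{\alpha,n}$; combining with the first estimate yields $\tfrac{1}{q_{\alpha,n+1}}<(1+\alpha)\bar\alpha\,\rho_\alpha^n$, which is the second line. Moreover, since $\beta_{\alpha,n}=x_{\alpha,0}\prod_{i=1}^n x_{\alpha,i}$ and $x_{\alpha,0}=|x-[x]_{\bar\alpha}|\le\bar\alpha$, the whole proposition reduces to showing that the orbit contracts at geometric rate $\rho_\alpha$, i.e. $\prod_{i=1}^n x_{\alpha,i}\le\rho_\alpha^n$.

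The key algebraic input is a three-term recurrence for the $\beta$'s. From the defining relation $\tfrac{1}{x_{\alpha,n}}=a_{\alpha,n+1}+\epsilon_{\alpha,n+1}x_{\alpha,n+1}$ together with $x_{\alpha,n}=\beta_{\alpha,n}/\beta_{\alpha,n-1}$ one obtains
\begin{equation*}
\beta_{\alpha,n-1}=a_{\alpha,n+1}\beta_{\alpha,n}+\epsilon_{\alpha,n+1}\beta_{\alpha,n+1},
\end{equation*}
which plays the role for the $\beta$'s that the three-term recurrence $q_{\alpha,n+1}=a_{\alpha,n+1}q_{\alpha,n}+\epsilon_{\alpha,n}q_{\alpha,n-1}$ plays for the denominators $\beta_{\alpha,n}=|q_{\alpha,n}x-p_{\alpha,n}|$. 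A single step is \emph{not} contracting by $\rho_\alpha$ (some $x_{\alpha,i}$ can be as large as $\bar\alpha>\rho_\alpha$), so I would not argue termwise. Instead I would introduce a Lyapunov/potential functional $V_n=\beta_{\alpha,n}+c\,\beta_{\alpha,n-1}$ and choose the constant $c$ so that $V_{n+1}\le\rho_\alpha V_n$ for every admissible pair $(a_{\alpha,n+1},\epsilon_{\alpha,n+1})$. In the golden regime this works cleanly with $c=1+g=1/g$: when $\epsilon_{\alpha,n+1}=+1$ the recurrence gives $gV_n-V_{n+1}=(a_{\alpha,n+1}-1)\beta_{\alpha,n}\ge0$, so $V_{n+1}\le gV_n$ and hence $\beta_{\alpha,n}\le V_n\le g^nV_0$, which already produces the exponential rate $g$.

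It remains to identify the extremal (slowest) admissible orbit in each of the three $\alpha$-ranges, which is what pins down the exponent $\rho_\alpha$ and, after refining the estimate on the initial terms, the sharp constant $\bar\alpha$. For $\alpha\in(g,1]$ the orbit fixed at the golden point $g$ (all digits equal to $1$, all signs $+$) lies in the domain $(0,\bar\alpha]$ and is the worst case, giving $\rho_\alpha=g$. As $\alpha$ drops below $g$ this orbit leaves the domain, the admissible digits force a faster contraction, and the worst case becomes the digit-$2$ orbit $[0;\overline{2}]$, giving $\rho_\alpha=\sqrt2-1$ on $[\sqrt2-1,g]$. For $0<\alpha<\sqrt2-1$ the map is close to the by-excess map $A_0$, which has an indifferent fixed point at $0$; the slowest admissible orbit is governed by this near-parabolic behaviour and yields $\rho_\alpha=\sqrt{1-2\alpha}$ (matching $\sqrt2-1$ at the junction $\alpha=\sqrt2-1$ and tending to $1$ as $\alpha\to0$). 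Equivalently, one can read $1/\rho_\alpha$ as the minimal expansion rate of the products of the matrices $\left(\begin{smallmatrix}a_{\alpha,n+1}&\epsilon_{\alpha,n}\\1&0\end{smallmatrix}\right)$ over admissible digit–sign sequences.

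The main obstacle I anticipate is the handling of the negative signs $\epsilon_{\alpha,n+1}=-1$: there the recurrence reads $\beta_{\alpha,n+1}=a_{\alpha,n+1}\beta_{\alpha,n}-\beta_{\alpha,n-1}$, so the simple choice of $c$ above no longer makes the potential monotone, and one must exploit the fact that $\epsilon=-1$ forces a sufficiently large digit (via the constraint $y_n:=\epsilon_{\alpha,n}x_{\alpha,n}\in[\alpha-1,\alpha)$) to recover contraction by $\rho_\alpha$ over two steps. Verifying that the potential, with an $\alpha$-dependent constant $c$, contracts uniformly over all admissible $(a,\epsilon)$ in each regime, together with determining the admissibility constraints that single out the extremal orbit, is the technical heart of the argument and is precisely the content of \cite[Lemma 3]{MoCaMa_99}.
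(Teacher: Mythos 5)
First, a point of comparison: the paper offers no proof of this proposition at all --- it is imported verbatim from \cite[Lemma 3]{MoCaMa_99} and used as a black box --- so there is no internal argument to measure your sketch against. On its own terms, the parts of your proposal that are actually carried out are correct: the reduction of the second inequality to the first via $\frac{1}{1+\alpha}<\beta_{\alpha,n}q_{\alpha,n+1}$ is exactly the natural route; the three-term recurrence $\beta_{\alpha,n-1}=a_{\alpha,n+1}\beta_{\alpha,n}+\epsilon_{\alpha,n+1}\beta_{\alpha,n+1}$ follows correctly from $1/x_{\alpha,n}=a_{\alpha,n+1}+\epsilon_{\alpha,n+1}x_{\alpha,n+1}$; and the identity $gV_n-V_{n+1}=(a_{\alpha,n+1}-1)\beta_{\alpha,n}$ for $\epsilon_{\alpha,n+1}=+1$ with $c=1/g$ checks out.

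Nevertheless, as a proof the proposal has a genuine gap, which you yourself flag in the last paragraph: everything that actually determines the three values of $\rho_\alpha$ is left unproven. Concretely: (i) the case $\epsilon_{\alpha,n+1}=-1$ is not handled, and it occurs on a set of positive measure for \emph{every} $\alpha<1$, not only below $g$; so even the regime $g<\alpha<1$ is not covered by the computation you verified (only $\alpha=1$ is). (ii) The constant is not obtained: $V_0=x_{\alpha,0}+1/g>1\geq\bar{\alpha}$, so $\beta_{\alpha,n}\leq g^nV_0$ does not yield $\beta_{\alpha,n}\leq\bar{\alpha}g^n$, and the promised ``refinement of the initial terms'' is not carried out. (iii) The identification of the extremal orbits in the regimes $[\sqrt{2}-1,g]$ and $(0,\sqrt{2}-1)$ --- in particular the origin of the rate $\sqrt{1-2\alpha}$ --- is asserted, not derived; the claim that $\epsilon=-1$ ``forces a sufficiently large digit'' restoring contraction over two steps is precisely the estimate one must prove. (iv) Most importantly, your closing sentence defers this ``technical heart'' to \cite[Lemma 3]{MoCaMa_99}, which is the very statement under proof, so the argument is circular as written. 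Either cite the lemma as the paper does, or complete the uniform contraction estimate over all admissible pairs $(a,\epsilon)$ in each of the three regimes; the sketch currently does neither.
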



\subsection{Unfolded $\alpha$-continued fractions and matching}

In this subsection, we recall another variant of  $\alpha$-continued fractions (called {\it unfolded} $\alpha$-continued fractions), and we shall show that the two algorithms have the same features. In particular, the folded and unfolded algorithms lead essentially to the same Wilton function, and we shall use the unfolded version of the algorithm in order to directly use the results about matching (results which have been developed in the unfolded setting).

 Following \cite{NaNa_08}, consider the family of maps $(T_\alpha)_{\alpha \in [0,1]}$, $T_\alpha: [\alpha-1,\alpha) \to[\alpha-1,\alpha)$  defined by $T_\alpha(0)=0$ and $$T_{\alpha}(x) = \frac{\epsilon(x)}{x} - c_{\alpha}(x) \qquad \textup{for }x \neq 0$$ 
with 
$$\epsilon(x) := \textup{sgn}(x) \qquad c_{\alpha}(x) := 
\left[ \frac{1}{|x|} + 1 - \alpha \right].$$
We also set
\begin{equation*}
\tilde{\epsilon}_{\alpha,n}= \tilde{\epsilon}_{\alpha,n}(x)=\epsilon( T^{n-1}_{\alpha}(x)), \ \ x_{\alpha,n}=T^{n}_{\alpha}(x) \ \ \mbox{ and }
c_{\alpha,n}= c_{\alpha,n}(x)=c_\alpha ( T^{n-1}_{\alpha}(x) ).
\end{equation*}
With these notations, we have
\begin{equation}\label{finite}
    x = \dfrac{\tilde{\epsilon}_{\alpha,1}}
{c_{\alpha,1}+ \dfrac{\tilde{\epsilon}_{\alpha,2}}
{\ddots+       \dfrac{\tilde{\epsilon}_{\alpha,n}}
{c_{\alpha,n}+x_{\alpha,n}}}}= \dfrac{\tilde{\epsilon}_{\alpha,1}}
{c_{\alpha,1}+ \dfrac{\tilde{\epsilon}_{\alpha,2}}
{\ddots+       \dfrac{\tilde{\epsilon}_{\alpha,n}}
{c_{\alpha,n}+\ddots}}}.
\end{equation}
The rightmost expression above is called  the \emph{infinite (unfolded) $\alpha$-continued fraction expansion of $x$}.

As in the folded version, by setting
\begin{equation}\label{matrix2}
M_{\alpha,x,n}:= \begin{pmatrix} 0  & \tilde{\epsilon}_{\alpha,1} \\ 1 & c_{\alpha,1} \end{pmatrix} 
\begin{pmatrix} 0  & \tilde{\epsilon}_{\alpha,2} \\ 1 & c_{\alpha,2} \end{pmatrix}
\cdots\begin{pmatrix} 0  & \tilde{\epsilon}_{\alpha,n} \\ 1 & c_{\alpha,n} \end{pmatrix},
\end{equation}
we can rewrite equation \eqref{finite} as $x=M_{\alpha,x,n}\cdot x_{\alpha,n}$ or, writing the entries of $M_{\alpha,x,n}$ explicitly,
\begin{equation*}
x=\frac{\tilde{p}_{\alpha,n-1}x_{n}+\tilde{p}_{\alpha,n}}{\tilde{q}_{\alpha,n-1}x_{n}
+\tilde{q}_{\alpha,n}}, \ \ \ \ \mbox{ where } M_{\alpha,x,n}=\begin{pmatrix} \tilde{p}_{\alpha,n-1}(x) & \tilde{p}_{\alpha,n}(x)\\ \tilde{q}_{\alpha,n-1}(x) & \tilde{q}_{\alpha,n}(x)\end{pmatrix}
\end{equation*}
and $\frac{\tilde{p}_{\alpha,n}} {\tilde{q}_{\alpha,n}}:= M_{\alpha,x,n}\cdot 0 $, which corresponds to the truncated $\alpha$-continued fraction (or {\it convergent}) of order $n$. 
In a similar way to obtain \eqref{eq:x-p/q}, the following approximation identity holds
\begin{equation*}
\left\vert x-\frac{\tilde{p}_{\alpha,n}} {\tilde{q}_{\alpha,n}}           \right\vert  =\frac{|x_{n}|} {\tilde{q}_{\alpha,n}(\tilde{q}_{\alpha,n}
+\tilde{q}_{\alpha,n-1}x_{n})}.
\end{equation*}

\subsubsection{Folded vs. Unfolded algorithms}

The map $A_{\alpha}$ is just the folded version of $T_{\alpha}$:
 the families $(T_\alpha)_\alpha$ and $(A_\alpha)_\alpha$ are semiconjugated by the map $x\mapsto |x|$, namely
\begin{eqnarray}\label{Ksemc}
    |T^K_\alpha(x)|=A^K_\alpha(|x|), \ \ \ x \in [\alpha-1,\alpha), \ \ K \in \mathbb{N},
\end{eqnarray}
and they are associated to a pair of continued fraction expansion called respectively {\it unfolded} and {\it folded} $\alpha$-continued fractions. 

For $x\in [\alpha-1,\alpha)$, we can define 
$\Tilde{\beta}_{\alpha,n}(x):=\prod^{n}_{i=0}|T^{i}_{\alpha}(x)|=\beta_{\alpha,n}(|x|)$, and also the  Wilton function associated to the unfolded algorithm, which is the one periodic function $\tilde{W}_\alpha$ which satisfies
\begin{equation}\label{Twilton}
\tilde{W}_{\alpha}(x)=\sum^{\infty}_{j=0} (-1)^j \tb_{\alpha,j-1}(x)\log(1/|T_{\alpha}^{j}(x)|), \ \ \ x\in [\alpha-1,\alpha).
\end{equation}
It is immediate to check that
\begin{equation}\label{twvsw}
    \tilde{W}_\alpha(x)=W_\alpha(|x|) \ \ \ \mbox{ for } x\in [\alpha-1,\alpha).
\end{equation}
This means that $\tilde{W}_\alpha(x)=W_\alpha(x)$ when $\alpha \geq 1/2$ (the two periodic function agree on $[0,\alpha)$ and by  symmetry also on $[\alpha-1,0]$); on the other hand for $\alpha<1/2$, one has that $\tilde{W}_\alpha(x)=W_\alpha(-x)$ (indeed, this identity holds on $[\alpha-1,0]$, and by symmetry also on $(0,\alpha)$).
Obviously, the regularity properties of $\tilde{W}_\alpha$ and $W_\alpha$ are the same, and since all the results about matching are stated for the family $T_\alpha$, in Section \ref{tre}, we will prefer to work in the unfolded setting.
As in the folded case, also the unfolded Wilton function satisfies a functional equation 
\begin{equation} \label{gen}
\tilde{W}_{\alpha}(x)=\tilde{W}_{\alpha}^{(K)}(x)+(-1)^{K+1}\Tilde{\beta}_{\alpha,K}(x)\tilde{W}_{\alpha}(T_{\alpha}^{K+1}(x)) \quad (K\in \N,~ x\in([\alpha-1,\alpha)\setminus \Q),
\end{equation}
where $\tilde W_\alpha^{(K)}$ is the partial sum $\sum_{j=0}^K (-1)^j \tilde\beta_{\alpha,K-1}(x)\log(1/|T_{\alpha}^j(x)|)$.)

\subsubsection{Matching property}

We now recall the {\it matching property} first discovered by \cite{NaNa_08} in connection with the study of the metric entropy of $T_\alpha$\footnote{Actually we shall  follow the notation introduced in \cite{CT12} (which is slightly different from  the original in \cite{NaNa_08}).}; in fact we will see that this matching property plays an important role also for the regularity properties of the Wilton function.

\begin{dfn}
   The value $\alpha \in (0, 1]$ is said to satisfy an {\it algebraic matching condition} of order $(n, m)$, denoted by $(n,m)_{\rm alg}$,
when the following matrix identity holds:
\begin{equation}\label{algmatch}
(n,m)_{\rm alg}: \qquad M_{\alpha, \alpha, n} = \matr{1}{1}{0}{1} M_{\alpha, \alpha-1, m} \matr{-1}{0}{1}{1} . 
\end{equation}  
\end{dfn}

To get some intuition of what this condition means from a dynamic point of view, 
one should note that $(n,m)_{\rm alg}$ implies
$$T_\alpha^{n+1}(\alpha) = T_\alpha^{m+1}(\alpha-1)$$
(see \cite[Appendix A1]{CT12}, and also the brief explanation on the next page). For this reason, the pair $(n,m)$ satisfying $(n,m)_{\rm alg}$ is referred to as {\it matching exponents}; the difference $m-n$ is called {\it matching index}.
Actually in \cite{CT12} it is proved that the set
$$\mathcal{M}_{\rm alg} := \{ \alpha \in (0, 1]:\exists \ n, m
    \in \mathbb{N} \  \textup{ s.t. }  \alpha \textup{ satisfies } (n,m)_{\rm alg} \}$$
contains an open neighbourhood of $(0,1]\cap \Q$ of full measure; the connected components of this open set are called {\it matching intervals}, and on any matching interval, both sides of \eqref{algmatch} are constant (see \cite[Lemma 3.7]{CT12}).
Any matching interval $J$ contains a unique rational value $p/q$ with a minimal denominator called the {\it pseudocenter} of $J$; moreover, the matching exponents $(n,m)$ can be easily extracted from the even length continued fraction expansion of its pseudocenter: indeed if $p/q$ is a pseudocenter, then by choosing the continued fraction expansion $[0;a_1,a_2,\cdots,a_\ell]$ with even length $\ell$ from its two possible expansions, the matching exponents $(n,m)$ of $J$ are 
$$n := \sum_{j: \  \textup{even}}a_j  \qquad\text{ and }\qquad m := \sum_{j: \  \textup{odd}} a_j,$$
i.e. every $x \in J$ satisfies  the matching condition $(n,m)_{\rm alg}$ (see \cite[Theorem 3.1]{CT12}).

In \cite{BCIT13}, a more explicit description of $\mathcal{M}_{\rm alg}$ is given in terms of the Gauss map $T_1$: indeed $[0,1]\setminus \mathcal{M}_{\rm alg}= \mathcal{E}$, where
\begin{equation}\label{exceptional}
 \mathcal{E}:=\{x : \ T_1^k(x)\geq x \ \  \forall k\in \N\}.
 \end{equation}
 Note that $\mathcal{E}$ is a zero measure set, but $\mathrm{dim}_H(\mathcal{E})=1$.

The Gauss map $T_1$ can also be used to characterize those rational values $p/q$ which are the pseudocenter of some matching interval $J$: indeed, this happens if and only if $T_1^k(p/q)\notin (0,p/q)$ for all $k\in \N$. 
Let us give a few examples of this phenomenon.
\begin{enumerate}
    \item The interval $(g, 1]$ is a matching  interval of index $-1.$
    \item The interval $(1-g,g)$ contains infinitely many matching intervals, all of index 0; the largest one is the rightmost one, namely $(\sqrt{2}-1, g)$; however, $\mathrm{dim}_H(\mathcal{E}\cap [1-g,g])>0$ (see \cite{CT13}).
    \item Every left neighbourhood of $1-g$ contains infinitely many matching intervals of index\footnote{
    In fact every left neighbourhood of $1-g$ contains infinitely many matching intervals of any index (see \cite{CT13}).
    } $+1$: indeed any rational value of the type 
    $u_m = [0;2,1^{2m-1}]$ (with a tail of $2m-1$ ones) is the pseudocenter of a matching interval on the left of $1-g$; these intervals accumulate on $1-g$ as $m\to +\infty$.  
\end{enumerate}

We conclude this small subsection with a remark that will play an important role in the following discussion.
It is known\footnote{See \cite[Appendix A1]{CT12}.} that the condition \eqref{algmatch} implies that
$$\frac{1}{T_\alpha^n(\alpha)}+\frac{1}{T_\alpha^m(\alpha -1)}=-1$$
This implies that the terms on the left side of the above sum have opposite signs, and 
if $
T_\alpha^{n+1}(\alpha)=\frac{\epsilon}{T_\alpha^n(\alpha)}-c$, then
$$\frac{1}{|T_\alpha^m(\alpha-1)|}=-\frac{\epsilon}{T_\alpha^m(\alpha-1)}=\epsilon + \frac{\epsilon}{T_\alpha^n(\alpha)}=\epsilon+c+T_\alpha^{n+1}(\alpha)
$$
and this last equality implies that
$$ T_\alpha^{m+1}(\alpha-1)=\frac{1}{|T_\alpha^m(\alpha-1)|} -\epsilon -c = T_\alpha^{n+1}(\alpha).$$
This last equality corresponds to the following matrix identity
$$ M_{\alpha, \alpha-1,m+1}^{-1} \begin{pmatrix}1&1\\0&1\end{pmatrix}^{-1}=M_{\alpha, \alpha,n+1}^{-1}.
$$

From the above identity, we also get that 
\begin{equation}\label{pnqn}
    \tilde{p}_{\alpha,n}(\alpha)=\tilde{p}_{\alpha,m}(\alpha-1) + \tilde{q}_{\alpha,m}(\alpha-1),
\ \  \ \ \ \
\tilde{q}_{\alpha,n}(\alpha)=\tilde{q}_{\alpha,m}(\alpha-1).
\end{equation}
Moreover, if $\alpha$ is rational, then one has that, for sufficiently small $\varepsilon>0$,
\begin{equation}\label{eq:MM}
   \begin{array}{cc}
  M_{\alpha, x-1,m+1}= M_{\alpha, \alpha-1,m+1}    & \mbox{ for } \alpha  < x < \alpha +\varepsilon, \\
   M_{\alpha, x,n+1}= M_{\alpha, \alpha,n+1}   &
   \mbox{ for } \alpha -\varepsilon < x < \alpha,
\end{array} 
\end{equation}
and this implies that, setting $\phi(x):=M_{\alpha, \alpha,n+1}^{-1} \cdot x$, for sufficiently small $\varepsilon >0 $ , we have 
\begin{equation}\label{phalpha}
    \begin{array}{cc}
  T_\alpha^{m+1}(x-1) = \phi(x)   & \mbox{ for } \alpha  < x < \alpha +\varepsilon, \\
   T_\alpha^{n+1}(x) = \phi(x),    &
   \mbox{ for } \alpha -\varepsilon < x < \alpha.
\end{array}
\end{equation}
From \eqref{pnqn} and \eqref{eq:MM}, we also get that there is an analytic function $b(x) $ such that
\begin{equation}\label{balpha}
    \begin{array}{cc}
  \tilde{\beta}_{\alpha,m}(x-1)=b(x)    & \mbox{ for } \alpha  < x < \alpha +\varepsilon, \\
   \tilde{\beta}_{\alpha,n}(x)=b(x)    &
   \mbox{ for } \alpha -\varepsilon < x < \alpha.
\end{array}
\end{equation}

\begin{lem}\label{matchlink}
Let $J$ be a matching interval with matching exponents $(m,n)$ and let $\alpha \in J \cap \Q$.
Then there exist a neighbourhood $U$ of $\alpha$,   functions $\beta, \phi$ which are smooth on $U$ and $h \in L^\infty(U)$ such that
\begin{equation}\label{localal}
 \tilde{W}_\alpha(x)=h(x)+{\mathrm{sgn}(\alpha-x)}^{n-m}\beta(x) \tilde{W}_\alpha(\phi(x)) \ \ \ \ \mbox{ for } x \in U.   
\end{equation}

\end{lem}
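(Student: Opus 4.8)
The plan is to work in the fundamental domain $[\alpha-1,\alpha)$ and to exploit the one-periodicity of $\tilde W_\alpha$ to treat the two sides of $\alpha$ separately, gluing the resulting one-sided identities at the end. First I would fix $\varepsilon>0$ small enough that the matching identities \eqref{eq:MM}, \eqref{phalpha} and \eqref{balpha} all hold on $(\alpha-\varepsilon,\alpha+\varepsilon)$; such an $\varepsilon$ exists precisely because $\alpha$ is rational, which is what grants the clean one-sided constancy of the matrices $M_{\alpha,x,n+1}$ and $M_{\alpha,x-1,m+1}$. I then set $U=(\alpha-\varepsilon,\alpha+\varepsilon)$ and $\phi(x):=M_{\alpha,\alpha,n+1}^{-1}\cdot x$ as in \eqref{phalpha}.

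On the left, for $\alpha-\varepsilon<x<\alpha$ (so $x$ already lies in the fundamental domain), I apply the functional equation \eqref{gen} with truncation order $K=n$,
$$\tilde W_\alpha(x)=\tilde W_\alpha^{(n)}(x)+(-1)^{n+1}\tilde\beta_{\alpha,n}(x)\,\tilde W_\alpha(T_\alpha^{n+1}(x)),$$
and substitute $T_\alpha^{n+1}(x)=\phi(x)$ and $\tilde\beta_{\alpha,n}(x)=b(x)$ from \eqref{phalpha}, \eqref{balpha} to get $\tilde W_\alpha(x)=\tilde W_\alpha^{(n)}(x)+(-1)^{n+1}b(x)\,\tilde W_\alpha(\phi(x))$. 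On the right, for $\alpha<x<\alpha+\varepsilon$, I use periodicity $\tilde W_\alpha(x)=\tilde W_\alpha(x-1)$ (now $x-1$ lies in the fundamental domain) and apply \eqref{gen} at $x-1$ with order $K=m$; invoking \eqref{phalpha} and \eqref{balpha} again gives $\tilde W_\alpha(x)=\tilde W_\alpha^{(m)}(x-1)+(-1)^{m+1}b(x)\,\tilde W_\alpha(\phi(x))$. Setting $\beta(x):=(-1)^{n+1}b(x)$ reconciles the two coefficients: on the left $\mathrm{sgn}(\alpha-x)^{n-m}=1$ matches $\beta(x)=(-1)^{n+1}b(x)$ directly, while on the right $\mathrm{sgn}(\alpha-x)^{n-m}=(-1)^{n-m}$ and $(-1)^{n-m}(-1)^{n+1}b(x)=(-1)^{m+1}b(x)$ since $(-1)^{2n}=1$. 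Hence both sides combine into the single identity \eqref{localal} with
$$h(x)=\begin{cases}\tilde W_\alpha^{(n)}(x),&\alpha-\varepsilon<x<\alpha,\\[2pt]\tilde W_\alpha^{(m)}(x-1),&\alpha<x<\alpha+\varepsilon.\end{cases}$$

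It then remains to verify the regularity claims. The function $b$ is analytic by \eqref{balpha}, so $\beta$ is smooth; and $\phi$ is a Möbius transformation whose value $\phi(\alpha)=T_\alpha^{n+1}(\alpha)\in[\alpha-1,\alpha)$ is finite, so after shrinking $\varepsilon$ its pole stays outside $U$ and $\phi$ is smooth on $U$. For $h\in L^\infty(U)$ I would observe that, since \eqref{eq:MM} forces the itinerary (digits and signs) to be constant across each one-sided interval, the orbit points $T_\alpha^j(x)$ for $0\le j\le n$ (resp. for the shifted point with $0\le j\le m$) are restrictions of fixed Möbius maps, hence analytic and, by compactness of the closed interval, bounded away from $0$; consequently each finite partial sum $\tilde W_\alpha^{(n)}$, $\tilde W_\alpha^{(m)}$ — a finite combination of the bounded products $\tilde\beta_{\alpha,j-1}$ with the bounded logarithms $\log(1/|T_\alpha^j|)$ — is bounded. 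The two pieces of $h$ need not agree at $x=\alpha$, but a jump there is harmless for an $L^\infty$ bound.

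The main obstacle is exactly this boundedness of $h$: one must rule out that any of the finitely many orbit points $T_\alpha^j(x)$ entering the partial sums degenerates to $0$ as $x\to\alpha$, which would force a logarithm to blow up. This is where the matching hypothesis does the essential work — constancy of the symbolic coding on each side of $\alpha$ keeps these orbit points uniformly away from $0$ — and it is also the reason the full singular (non-BMO) part is packaged into the single transported term $\beta(x)\tilde W_\alpha(\phi(x))$. The only other delicate point is the sign bookkeeping when merging the two one-sided formulas, which the factor $\mathrm{sgn}(\alpha-x)^{n-m}$ (depending only on the parity of the matching index $m-n$) handles cleanly.
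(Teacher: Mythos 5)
Your proposal is correct and follows essentially the same route as the paper: truncate the functional equation \eqref{gen} at order $n$ on the left of $\alpha$ and at order $m$ on the right (after using periodicity), then invoke \eqref{phalpha} and \eqref{balpha} to identify the transported terms, absorbing the partial sums into $h$. The extra details you supply — the explicit sign reconciliation via $(-1)^{n-m}(-1)^{n+1}=(-1)^{m+1}$ and the verification that $h\in L^\infty$ because the finitely many orbit points stay away from $0$ near $\alpha$ — are consistent with (and slightly more careful than) the paper's argument.
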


\begin{proof}
We will use the functional equation \eqref{gen}.
If $x$ is in a left neighbourhood of $\alpha$, then we can write
$$\tilde{W}_\alpha(x)=\tilde{W}^{(n)}_\alpha(x) +(-1)^{n+1}\tilde{\beta}_{\alpha,n}(x)\tilde{W}_\alpha(T_\alpha^{n+1}(x)).$$
On the other hand, if $x$ belongs to a right neighbourhood of $\alpha$, then we can write 
$$\tilde{W}_\alpha(x)=\tilde{W}_\alpha(x-1)=\tilde{W}_\alpha^{(m)}(x-1)+(-1)^{m+1}\tilde{\beta}_{\alpha,m}(x-1)\tilde{W}_\alpha(T_\alpha^{m+1}(x-1)).$$
Therefore, we can use \eqref{balpha} and \eqref{phalpha} to conclude that for $x\in U=(\alpha -\varepsilon,\alpha+\varepsilon)$, we have
$$\tilde{W}_\alpha(x)=h(x)+{\mathrm{sgn}(\alpha - x)}^{n-m}\beta(x) \tilde{W}_\alpha(\phi(x)),$$
where
$$
h(x)=\tilde{W}^{(n)}_\alpha(x) \chi_{(\alpha-\varepsilon, \alpha)}(x)+
\tilde{W}_\alpha^{(m)}(x-1)\chi_{(\alpha, \alpha+\varepsilon)}(x)\text{ and } \ \ \ 
\beta(x) =\pm b(x).$$
\end{proof}

\section{Behaviour of $\tilde{W}_{\alpha}$ near rational points}\label{tre}

\begin{lem}\label{Wzero}
Let $\alpha\in (0,1]$, then 
$$ \int_{0}^{x}\tilde{W}_\alpha (t)dt = -x\log x + x+ o(x)  \ \ \ \mbox{ as } x\to 0^+.$$
\end{lem}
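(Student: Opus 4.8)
The plan is to use the first functional equation (the case $K=0$ of the unfolded identity \eqref{gen}) to strip off the logarithmic singularity of $\tilde{W}_\alpha$ at the origin, and then to show that everything left over integrates to $o(x)$. First I would record that for $x\in(0,\alpha)\setminus\Q$ (which holds for all small $x>0$ since $\alpha>0$), the case $K=0$ of \eqref{gen} reads
$$\tilde{W}_\alpha(x) = -\log x - x\,\tilde{W}_\alpha(T_\alpha(x)),$$
because $\tilde{W}_\alpha^{(0)}(x)=\tilde{\beta}_{\alpha,-1}(x)\log(1/|x|)=-\log x$ and $\tilde{\beta}_{\alpha,0}(x)=|x|=x$ for $x>0$. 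Integrating over $(0,x)$ and using $\int_0^x(-\log t)\,dt=-x\log x+x$, I obtain
$$\int_0^x \tilde{W}_\alpha(t)\,dt = -x\log x + x - R(x), \qquad R(x):=\int_0^x t\,\tilde{W}_\alpha(T_\alpha(t))\,dt.$$
It therefore remains to prove that $R(x)=o(x)$ as $x\to 0^+$; in fact I expect the stronger $R(x)=O(x^2)$.

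Since $0\le t\le x$ throughout the domain of integration, I would bound $|R(x)|\le x\int_0^x|\tilde{W}_\alpha(T_\alpha(t))|\,dt$, so that the whole problem reduces to the estimate $\int_0^x|\tilde{W}_\alpha(T_\alpha(t))|\,dt=O(x)$. To establish this I would decompose $(0,x)$ into the branches of $T_\alpha$: for each integer $k$, on the cylinder
$$I_k:=\{t>0 : c_\alpha(t)=k\}=\left(\tfrac{1}{k+\alpha},\ \tfrac{1}{k-1+\alpha}\right]$$
one has $T_\alpha(t)=\tfrac1t-k$, which is a decreasing bijection of $I_k$ onto $[\alpha-1,\alpha)$. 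The substitution $s=\tfrac1t-k$, $t=\tfrac{1}{s+k}$, $dt=-\tfrac{ds}{(s+k)^2}$ then gives
$$\int_{I_k}|\tilde{W}_\alpha(T_\alpha(t))|\,dt=\int_{\alpha-1}^{\alpha}\frac{|\tilde{W}_\alpha(s)|}{(s+k)^2}\,ds\le\frac{\|\tilde{W}_\alpha\|_{L^1([\alpha-1,\alpha))}}{(k+\alpha-1)^2},$$
where finiteness of the $L^1$ norm is guaranteed by the fact, recalled after \eqref{aWFun}, that $\tilde{W}_\alpha\in L^1$. Since $(0,x)$ is covered by those $I_k$ with $k\ge K_x$ for some $K_x$ of order $1/x$, summing the tail $\sum_{k\ge K_x}(k+\alpha-1)^{-2}$, which is comparable to $1/K_x$, yields $\int_0^x|\tilde{W}_\alpha(T_\alpha(t))|\,dt\le Cx$ for small $x$. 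Hence $|R(x)|\le Cx^2=o(x)$, completing the argument.

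The main obstacle is exactly this control of the remainder $R(x)$. The integrand $\tilde{W}_\alpha(T_\alpha(t))$ does not decay as $t\to 0^+$; rather, as $t$ ranges over a shrinking neighbourhood of the origin, $T_\alpha(t)$ sweeps across the whole interval $[\alpha-1,\alpha)$ infinitely often, so no pointwise bound on $\tilde{W}_\alpha$ near $0$ is available. The key idea is to trade this oscillation for the Jacobian $1/(s+k)^2$ of the branchwise change of variables: this is precisely what produces the summable weight $(k+\alpha-1)^{-2}$ and, combined with the $L^1$ bound on $\tilde{W}_\alpha$ over one period, extracts a full power of $x$ (and in fact, together with the explicit factor $t\le x$, a second power).
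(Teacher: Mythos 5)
Your proposal is correct, and its skeleton coincides with the paper's: both apply the $K=0$ functional equation $\tilde{W}_\alpha(x)=-\log x - x\,\tilde{W}_\alpha(T_\alpha(x))$, integrate the $-\log t$ term exactly to get $-x\log x + x$, and then must show the remainder $\int_0^x t\,\tilde{W}_\alpha(T_\alpha(t))\,dt$ is $o(x)$. Where you diverge is in how that remainder is controlled. The paper argues globally: it shows $\tilde{W}_\alpha\circ T_\alpha\in L^1$ by comparing with the invariant measure $d\mu_\alpha=\rho_\alpha\,dx$, whose density is bounded above and below, so that $\|\tilde{W}_\alpha\circ T_\alpha\|_1\le \frac{M}{m}\|\tilde{W}_\alpha\|_1$; then $\int_0^x|\tilde{W}_\alpha\circ T_\alpha|\to 0$ by absolute continuity of the integral, giving $o(x)$. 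You instead work locally near $0$, decomposing $(0,x)$ into the branches $I_k$ of $T_\alpha$ and changing variables on each, so that the Jacobian $1/(s+k)^2$ turns the one-period $L^1$ norm of $\tilde{W}_\alpha$ into a summable tail $\sum_{k\ge K_x}(k+\alpha-1)^{-2}\asymp x$. This is more elementary (it does not invoke the existence of an a.c.i.m.\ with two-sided density bounds) and quantitatively stronger, yielding $O(x^2)$ for the remainder rather than just $o(x)$; both arguments still rest on the same input, namely $\tilde{W}_\alpha\in L^1$, which the paper traces back to \cite{MoCaMa_99}. Either route proves the lemma; yours would even sharpen its statement.
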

\begin{proof}
The proof of this follows directly integrating the functional equation  $\tilde{W}_\alpha(x)=-\log x -x\tilde{W}_\alpha(T_\alpha(x))$: the term $-x\log x +x$ comes from the integration of $-\log x$, while the integration of $x\tilde{W}_\alpha(T_\alpha(x))$ leads to a term which is $o(x)$ as $x\to 0$ because the function $\tilde{W}_\alpha\circ T_\alpha$ is in $L^1$. Indeed, this last property follows directly from the fact that the invariant measure  has a BV density $d\mu_\alpha(x)=\rho_\alpha(x) dx$ such that $0<m\leq \rho_\alpha(x)\leq M$, and since 
$$\int_{\alpha-1}^\alpha  |\tilde{W}_\alpha| \circ T_\alpha (x) \rho_\alpha(x) dx=    \int_{\alpha-1}^\alpha  |\tilde{W}_\alpha(x)|  \rho_\alpha(x) dx,$$
we easily get that $\|\tilde{W}_\alpha \circ T_\alpha\|_1\leq \frac{M}{m}\|\tilde{W}_\alpha\|_1$, where $\|\cdot\|_1$ is the $L^1$-norm with respect to the Lebesgue measure.
The fact $\tilde W_{\alpha}\in L^1$ is derived from $B_\alpha = \sum_j \beta_{\alpha,j-1}\log\frac{1}{x_{\alpha,j}}\in L^1$ proven in \cite[Corollary 13]{MoCaMa_99}.
\end{proof}
\begin{rem}\label{prototype}
From the above lemma, one easily deduces the behaviour of $\tilde{W}_\alpha$ on a symmetric neighbourhood of the origin:
\begin{itemize}
    \item If $\alpha < 1$, then $\tilde{W}_\alpha$ is even on a neighbourhood of $0$, and the above expansion holds also for negative values:
    $$   \int_{0}^{x}\tilde{W}_\alpha (t)dt = -x\log|x| + x+ o(x)  \ \ \ \mbox{ as } x\to 0.   $$   
    \item However, this is not the case for $\alpha =1$; indeed, in this case, 
    $$\int_0^x \tilde{W}_1(t) dt = -|x|\log|x| + O(x)  \mbox{ as } x\to 0.$$
\end{itemize}
\end{rem}
In order to prove the second claim of the above remark, let us observe that if $x<0$, then one can use the functional equation $\tilde{W}_1(x)= \tilde{W}_1(x+1)= -\log(1+x) -(1+x)\tilde{W}_1(|x|/(1+x))$  and change of variable to get
\begin{eqnarray*}
\int_{0}^{x}\tilde{W}_1 (t)dt = -\int_{x}^{0} \tilde{W}_1 (t)dt &=& O(x^2) +\int_{x}^{0} (1+t) \tilde{W}_1 (-t/(1+t))dt\\
&=&O(x^2)+\int_0^{\frac{-x}{1+x}}\frac{1}{(1+y)^3}W_1(y)dy\\
&=&x\log |x| + O(x)    \mbox{ for } x \to 0^-.
\end{eqnarray*}  
Let us anticipate that the behaviour of $\tilde{W}_1$ near the origin leads to the failure of BMO property (as we shall soon see in  Lemma \ref{BMOfailure}).

In order to discuss the different asymptotic properties of $\tilde{W}_\alpha$ at rational points, we give a couple of definitions as follows.
\begin{dfn}
Let $w\in L^1(a,b)$ and $\xi \in (a,b)$; we say that $\xi$ is a singularity of {\em type A} if one of the following conditions holds
\begin{enumerate}

\item[\rm{($A_+$)}] $\lim_{h \to 0} \frac{1}{|h|}\int_\xi^{\xi +h} w(t) dt = +\infty$,
\item[\rm{($A_-$)}] $\lim_{h \to 0} \frac{1}{|h|}\int_\xi^{\xi +h} w(t) dt = -\infty$.
\end{enumerate}
We say that $\xi$ is a singularity of {\em type B} if one of the following conditions holds
\begin{enumerate}
\item[\rm{($B_+$)}] $\lim_{h \to 0} \frac{1}{h}\int_\xi^{\xi +h} w(t) dt = +\infty$,
\item[\rm{($B_-$)}] $\lim_{h \to 0} \frac{1}{h}\int_\xi^{\xi +h} w(t) dt = -\infty$.
\end{enumerate}
\end{dfn}
By Remark \ref{prototype}, the point $\xi=0$ is a type A singularity for $\tilde{W}_1$, while for $\alpha\in (0,1)$, $\xi=0$ is the prototype of type B singularity.

Type A and B are mutually exclusive conditions at a point $\xi$, and the multiplication by the function $\sigma(x):={\rm sign}(\xi-x)$ produces a switch between type A and B. Even if in general, types A and B do not cover all possible singularities of an $L^1$ function, this definition will well describe the behaviour of $\tilde{W}_\alpha$ at rational values. 

\begin{thm}\label{singclass}
Let $\alpha\in [0,1]$ and $\xi \in [\alpha-1,\alpha)\cap \Q$.
\begin{enumerate}
    \item[\rm{(i)}] If $\{\alpha, \alpha-1\} \cap \{T^k_\alpha(\xi), k\in \N\}=\emptyset $, then $\xi$ is a type B singularity for $\tilde{W}_\alpha$.
    \item[\rm{(ii)}]Otherwise, $\alpha \in \Q\cap [0,1]$; and in this latter case, $\xi$ is a type B singularity for $\tilde{W}_\alpha$ if and only if $\alpha$ belongs to a matching interval of even index.
\end{enumerate}
\end{thm}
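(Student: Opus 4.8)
The plan is to compute the singularity type at $\xi$ by propagating the known behaviour of $\tilde{W}_\alpha$ at the single ``folding point'' $0$ backwards along the forward $T_\alpha$-orbit of $\xi$, which is finite and ends at $0$ because $\xi$ is rational. The engine is the $K=0$ case of \eqref{gen}, i.e. $\tilde{W}_\alpha(x)=\log(1/|x|)-|x|\,\tilde{W}_\alpha(T_\alpha x)$. If $T_\alpha$ is a local homeomorphism of the circle $\R/\Z$ at a point $\zeta\neq 0$, then the leading singular term of $\int_\zeta^{\zeta+h}\tilde{W}_\alpha$ (which is $-c\,h\log|h|$ in type B, or $\mp c\,|h|\log|h|$ in type A) is pulled back by a local diffeomorphism, multiplied by the smooth nonvanishing factor $-|x|$, and shifted by the smooth term $\log(1/|x|)$; each operation preserves the parity in $h$ of this leading term, hence the type. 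The key structural remark is that $T_\alpha$, viewed on $\R/\Z$, is a local homeomorphism everywhere except at $0$ and at the seam $\alpha-1\equiv\alpha$: at an interior branch boundary $x_0$ the one-sided limits of $T_\alpha$ are $\alpha-1$ and $\alpha$, which agree mod $1$, so $T_\alpha$ is in fact continuous and monotone across $x_0$ on the circle. The two base inputs are from Remark~\ref{prototype}: $0$ is a type-B singularity for $\alpha<1$ (since $\tilde{W}_\alpha$ is even there) and a type-A singularity for $\alpha=1$.

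For (i) the hypothesis says the orbit avoids $\alpha-1$; since $\alpha-1$ is attained by $T_\alpha$ only at branch boundaries, the orbit meets no branch boundary before hitting $0$, and for $\alpha<1$ the last point $T_\alpha^{N-1}(\xi)=\pm 1/k$ (with $N$ minimal such that $T_\alpha^N(\xi)=0$, and $k\in\N$) is not a branch boundary either, because branch boundaries sit at $1/|x|=k'-1+\alpha\notin\Z$. Hence $T_\alpha^N$ is a genuine local homeomorphism of a neighbourhood of $\xi$ onto a neighbourhood of $0$, and by the reduction above $\xi$ inherits the type of $0$, namely type B. (Case (i) forces $\alpha<1$, since for $\alpha=1$ one has $\alpha-1=0$, which always lies in the orbit.)

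For (ii) matching enters. First, $\{\alpha,\alpha-1\}$ can meet the orbit only through $\alpha-1$ (as $\alpha\notin[\alpha-1,\alpha)$); since floors are integers, $T_\alpha^k(\xi)\in\Q$ for all $k$, so $T_\alpha^k(\xi)=\alpha-1$ forces $\alpha\in\Q$, and then $\alpha$ lies in a matching interval $J$ with exponents $(n,m)$. For $\alpha<1$ the orbit meets the seam exactly once: a second visit would make $\alpha-1$ periodic, impossible for a rational orbit terminating at the fixed point $0$. Thus the orbit reaches the seam at some time $i+1$, crosses it, and by \eqref{phalpha}–\eqref{balpha} arrives at the common post-matching point $\zeta^{\ast}=\phi(\alpha)=T_\alpha^{n+1}(\alpha)=T_\alpha^{m+1}(\alpha-1)$; the tail from $\zeta^{\ast}$ to $0$ no longer meets the seam, so $\zeta^{\ast}$ is type B by case (i). Pulling the seam back to $\xi$ through the circle-homeomorphism $w(x):=T_\alpha^{i+1}(x)$ and using Lemma~\ref{matchlink}, one obtains
$$\tilde{W}_\alpha(x)=[\text{bounded}]+C\cdot{\mathrm{sgn}(\alpha-w(x))}^{\,n-m}\,\tilde{W}_\alpha\big(\phi(w(x))\big),\qquad C\neq 0 .$$
Here \eqref{pnqn}–\eqref{balpha} force the two one-sided $\tilde\beta$-factors to coincide, and monotonicity of $w$ on $\R/\Z$ forces $\phi\circ w$ to have the same orientation on both sides of $\xi$ (possibly with a corner, but only the orientation matters). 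Feeding the odd expansion $\int_{\zeta^{\ast}}^{\zeta^{\ast}+r}\tilde{W}_\alpha=-cr\log|r|+O(r)$ through this diffeomorphism gives (up to relabelling the two sides)
$$\int_\xi^{\xi+h}\tilde{W}_\alpha\sim -Cc\,h\log|h|\ (h>0),\qquad {(-1)}^{\,n-m}(-Cc\,h\log|h|)\ (h<0),$$
which is odd in $h$ (type B) precisely when $n-m$ is even and even in $h$ (type A) precisely when $n-m$ is odd. Since the matching index is $m-n$, this is exactly ``type B iff $\alpha$ lies in a matching interval of even index''.

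Finally I would dispose of $\alpha=1$, where the seam coincides with the folding point $0$: the orbit reaches $0$ through the terminal branch boundary $\pm 1/k$, a circle-homeomorphism point, so $\xi$ inherits the type-A singularity of $0$; this agrees with the criterion since $(g,1]$ has odd matching index $-1$. The \emph{main obstacle} is the sign-and-orientation bookkeeping at the seam in case (ii): one must verify carefully, via \eqref{pnqn}–\eqref{balpha} and the monotonicity of $T_\alpha$ across interior branch boundaries on $\R/\Z$, that the two one-sided reductions to $\zeta^{\ast}$ share the same $\tilde\beta$-factor and the same orientation, so that the only surviving asymmetry is the sign ${(-1)}^{\,n-m}$ supplied by Lemma~\ref{matchlink}. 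Everything else — boundedness of the partial-sum remainders and of $h$, and negligibility of the smooth corrections, which contribute $O(h)$ and $O(h^2\log|h|)=o(h\log|h|)$ respectively — is routine once this is in place.
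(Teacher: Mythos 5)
Your proposal is correct and follows essentially the same route as the paper: reduce the type at $\xi$ to the type at $0$ (type B for $\alpha<1$, type A for $\alpha=1$, as in Remark~\ref{prototype}) by pulling back through the fractional transformation $T_\alpha^{k_0+1}$ via the functional equation and Lemma~\ref{coc}, and in case (ii) insert the matching relation of Lemma~\ref{matchlink} at the single visit to $\alpha-1$ so that the factor ${\mathrm{sgn}(\alpha-x)}^{n-m}$ decides between type A and type B. The only cosmetic differences are that you compose the two reduction steps into one map and justify that $\phi(\alpha)$ falls under case (i) by non-periodicity of the rational orbit rather than by the paper's denominator-decrease argument; both are valid.
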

When the condition of Theorem \ref{singclass}-(i) holds, we will say that $\xi$ is \emph{$\alpha$-regular}. Note that if $\alpha\in (0,1]\setminus \Q$, then every $\xi \in \Q$ is $\alpha$-regular. On the other hand, if some  $\xi \in \Q$ is not $\alpha$-regular, then $\alpha \in \Q$, and it belongs to some matching interval $J$; in this latter case, $\xi$ is a type A   (resp. B) singularity for $\tilde{W}_\alpha$ if the matching index of $J$ is odd (resp. even). In particular, if $\alpha\in [0,1]\cap \Q$ is a rational parameter belonging to a matching interval of odd index, then $\alpha$ is a type A singularity of $\tilde{W}_\alpha$.

The above consideration, together with the following general principle, will be the main tool to show that BMO condition fails for some parameters.
\begin{lem}\label{BMOfailure}
Let $w\in L^1(a,b),$ and let $\xi \in (a,b)$ be a type A singularity for $w$. Then,
\begin{enumerate}
    \item[\rm{(i)}] for every $\varepsilon>0$, there exist $x^+\in (\xi,\xi+\varepsilon)$ and $x^-\in (\xi-\varepsilon,\xi)$ such that $$\int_{x^-}^{x^+}w(t)dt=0,$$ and
    \item[\rm{(ii)}] $w \notin \mathrm{BMO}$.
\end{enumerate}
\end{lem}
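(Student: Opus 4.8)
The plan is to reduce the whole statement to the study of the primitive
$$F(h) := \int_\xi^{\xi+h} w(t)\,dt,$$
which is absolutely continuous (being the integral of an $L^1$ function) and satisfies $F(0)=0$. In these terms the two type A conditions read precisely $\lim_{h\to 0} F(h)/|h| = +\infty$ (case $A_+$) and $\lim_{h\to 0}F(h)/|h|=-\infty$ (case $A_-$). The crucial qualitative feature, and the one distinguishing type A from type B, is that the denominator is $|h|$ rather than $h$: this forces $F$ to keep a \emph{constant sign} on a punctured neighbourhood of $0$ (positive in case $A_+$, negative in case $A_-$), with $|F(h)|/|h|\to+\infty$. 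Replacing $w$ by $-w$ if necessary, I may assume we are in case $A_+$, so that there is $\eta>0$ with $F(h)>0$ for $0<|h|<\eta$ and $F(h)\to 0^+$ as $h\to 0$ from either side.

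For part (i), I fix $\varepsilon>0$, which I may assume to be less than $\eta$. Since $F$ is continuous on $[0,\varepsilon)$ with $F(0)=0$ and $F>0$ on $(0,\varepsilon)$, I pick any $a_0\in(0,\varepsilon)$ and set $c_+:=F(a_0)>0$; by the intermediate value theorem $F$ attains every value of $(0,c_+]$ at some point of $(0,a_0]$. Likewise, on the negative side I pick $b_0\in(-\varepsilon,0)$ and set $c_-:=F(b_0)>0$, so that $F$ attains every value of $(0,c_-]$ at some point of $[b_0,0)$. Choosing $c:=\min(c_+,c_-)$ yields $a\in(0,\varepsilon)$ and $b\in(-\varepsilon,0)$ with $F(a)=F(b)=c$; setting $x^+:=\xi+a$ and $x^-:=\xi+b$ gives
$$\int_{x^-}^{x^+}w(t)\,dt = F(a)-F(b)=0,$$
which is exactly (i).

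For part (ii), I run the construction of (i) along a sequence $\varepsilon_k\to 0$, producing intervals $I_k=[x^-_k,x^+_k]$ with $\int_{I_k}w=0$; hence the mean $w_{I_k}$ vanishes and $\frac{1}{|I_k|}\int_{I_k}|w-w_{I_k}|=\frac{1}{|I_k|}\int_{I_k}|w|$. Writing $a_k=x^+_k-\xi$, $b_k=x^-_k-\xi$ and $c_k=F(a_k)=F(b_k)>0$, the triangle inequality applied separately on $[x^-_k,\xi]$ and $[\xi,x^+_k]$ gives $\int_{I_k}|w|\ge 2c_k$, while $|I_k|=a_k+|b_k|$. Therefore
$$\frac{1}{|I_k|}\int_{I_k}|w|\,dt \;\ge\; \frac{2c_k}{a_k+|b_k|} \;\ge\; \min\!\left(\frac{F(a_k)}{a_k},\frac{F(b_k)}{|b_k|}\right).$$
Since $a_k,|b_k|\in(0,\varepsilon_k)$ tend to $0$ and $F(h)/|h|\to+\infty$, the right-hand side diverges, so the mean oscillations of $w$ are unbounded and $w\notin\mathrm{BMO}$.

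The only genuinely delicate point is this last estimate: one must convert the qualitative statement $F(h)/|h|\to\infty$ into an actual divergence of mean oscillations. The key is the elementary bound $\tfrac{2c}{a+|b|}\ge\min(c/a,c/|b|)$ combined with the observation that the vanishing-integral intervals from (i) can be taken with both endpoints arbitrarily close to $\xi$; this forces $c/a=F(a)/a$ and $c/|b|=F(b)/|b|$ to blow up simultaneously. Everything else (absolute continuity of $F$, the intermediate value argument, the sign consistency on the two sides of $\xi$) is routine, and the passage from $A_+$ to $A_-$ is handled by the sign change $w\mapsto -w$.
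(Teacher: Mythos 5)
Your proof is correct and follows essentially the same route as the paper's: part (i) via the intermediate value theorem applied to the primitive of $w$ (exploiting that the type A condition forces the signed integrals to have coherent signs on the two sides of $\xi$), and part (ii) by observing that the zero-mean intervals have oscillation $\frac{1}{|I|}\int_I |w|\,dt$, which is bounded below by splitting the integral at $\xi$ and invoking the divergence of $\frac{1}{|h|}\int_\xi^{\xi+h}w(t)\,dt$. The only cosmetic differences are that in (i) the paper fixes the left endpoint at $\xi-\varepsilon$ and moves only the right one, and in (ii) it works with a uniform bound $M$ on the whole punctured neighbourhood instead of your $\min$ inequality at the two chosen endpoints.
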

We shall also need another lemma, which will be very useful in combination with the functional equation.
\begin{lem}\label{coc}
Let $w\in L^1(a,b),$ let $\xi \in (a,b)$, and let $\beta, \phi$ be two smooth functions  such that 
    \begin{itemize}
        \item  $\beta(\xi) \neq 0$,
        \item  $\phi'(\xi) \neq 0$ (hence $\phi$ is locally invertible near $\xi$).
    \end{itemize}
    If $\phi(\xi)\in (a,b)$ is a singularity of type B (resp. type A) for $w$, then the function $g(x):=\beta(x)w(\phi(x))$ has a singularity of type B (resp. type A) at $\xi$.
\end{lem}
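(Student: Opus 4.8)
The plan is to transport the singularity from $\eta:=\phi(\xi)$ back to $\xi$ via the substitution $u=\phi(x)$, which bundles the weight $\beta$ and the Jacobian of $\phi$ into a single smooth, nonvanishing factor; the core point is then that multiplying $w$ by such a factor cannot change the type of singularity. Both type A and type B are handled in parallel, the sole difference being whether one normalises the averages by $|h|$ or by $h$. Concretely, since $\phi'(\xi)\neq 0$, $\phi$ is a $C^1$-diffeomorphism of a neighbourhood of $\xi$ onto a neighbourhood of $\eta$, so $g(x)=\beta(x)w(\phi(x))$ is locally integrable near $\xi$, and substituting $u=\phi(x)$ in the oriented integral gives
\[
\int_\xi^{\xi+h} g(x)\,dx=\int_\eta^{\phi(\xi+h)}\psi(u)\,w(u)\,du,\qquad \psi(u):=\frac{\beta(\phi^{-1}(u))}{\phi'(\phi^{-1}(u))}.
\]
Here $\psi$ is smooth near $\eta$ with $\psi(\eta)=\beta(\xi)/\phi'(\xi)\neq 0$. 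Setting $k=k(h):=\phi(\xi+h)-\eta$, we have $k(h)\to 0$ and $k(h)/h\to\phi'(\xi)$ as $h\to0$, so the whole question reduces to the behaviour of $\frac1k\int_\eta^{\eta+k}\psi w$ (for type B) or $\frac1{|k|}\int_\eta^{\eta+k}\psi w$ (for type A).

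The heart of the argument — and the one step that genuinely uses $w\in L^1$ — is to strip off the weight $\psi$. Writing $\psi=\psi(\eta)+(\psi-\psi(\eta))$ and using that $\psi$ is Lipschitz near $\eta$, so $|\psi(u)-\psi(\eta)|\le C|u-\eta|\le C|k|$ on the interval $J(k)$ with endpoints $\eta$ and $\eta+k$, one obtains
\[
\frac{1}{|k|}\left|\int_\eta^{\eta+k}\bigl(\psi(u)-\psi(\eta)\bigr)w(u)\,du\right|\le C\int_{J(k)}|w(u)|\,du.
\]
Since $w\in L^1$, the integral of $|w|$ over the shrinking interval $J(k)$ tends to $0$, so the remainder is negligible after dividing by $|k|$ (hence also after dividing by $k$). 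Therefore $\frac1k\int_\eta^{\eta+k}\psi w=\psi(\eta)\,\frac1k\int_\eta^{\eta+k}w+o(1)$, and identically with $\frac1{|k|}$: multiplication by a smooth factor that is nonzero at $\eta$ leaves the (signed, infinite) limit intact, rescaled only by the scalar $\psi(\eta)$ and thus possibly flipped in sign.

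Finally I would recombine with the chain-rule factor. For type B, $\frac1h\int_\xi^{\xi+h}g=\frac{k(h)}h\cdot\frac1{k(h)}\int_\eta^{\eta+k(h)}\psi w\to\phi'(\xi)\,\psi(\eta)\cdot(\pm\infty)=\beta(\xi)\cdot(\pm\infty)$, where $\pm\infty$ is the two-sided limit supplied by the type B singularity of $w$ at $\eta$ and we used $\phi'(\xi)\psi(\eta)=\beta(\xi)$; since this is again a definite signed infinity, $\xi$ is a type B singularity of $g$, with sign $\sign\beta(\xi)$ relative to that of $w$. For type A the computation is word-for-word the same with $\frac1h,\frac{k(h)}h$ replaced by $\frac1{|h|},\frac{|k(h)|}{|h|}\to|\phi'(\xi)|$, giving the limit $|\phi'(\xi)|\psi(\eta)\cdot(\pm\infty)=\sign(\phi'(\xi))\beta(\xi)\cdot(\pm\infty)$, again a signed infinity, so $\xi$ is type A for $g$. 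I expect the perturbation estimate of the second paragraph to be the only real obstacle: a priori $w$ is merely integrable and could oscillate wildly, so one must be certain that dressing it with a smooth weight cannot manufacture cancellations that spoil the one-sided limit — and this is exactly what the Lipschitz bound, combined with the absolute continuity of $\int|w|$, rules out.
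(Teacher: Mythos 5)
Your proof is correct and follows essentially the same route as the paper's: reduce to the case $\phi=\mathrm{id}$ via the oriented change of variables $u=\phi(x)$, which packages $\beta$ and the Jacobian into a single smooth weight $\psi$ nonvanishing at $\eta=\phi(\xi)$, and then show that multiplication by such a weight cannot alter the type of singularity. The only (immaterial) difference is in that last step: the paper integrates by parts against the continuous primitive $W(x)=\int_{\eta}^{\eta+x}w(t)\,dt$ to isolate the main term $W(\eta+x)\psi(\eta+x)$, whereas you freeze the weight at $\psi(\eta)$ and control the remainder by a Lipschitz bound combined with the absolute continuity of $\int|w|$ --- both yield a negligible error against the blowing-up main term.
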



Before proving our claims, let us show what happens for $\alpha \in (g,(5-\sqrt{13})/2)$. 
We have $1-\alpha\in(\frac{1}{4-\alpha},\frac{1}{2+\alpha})$ and $\frac{1}{\alpha}-1\in(\frac{1}{3-\alpha},\frac{1}{1+\alpha})$.
If $\epsilon$ is sufficiently small and $\alpha + \epsilon > x>\alpha$, then, by the functional equation \eqref{gen} for $K=0$, we get
$$ \tilde{W}_\alpha (x)= \tilde{W}_\alpha (1-x)=-\log(1-x)-(1-x)\tilde{W}_\alpha\left(3-\frac{1}{1-x}\right).$$
Let us set $\phi(x):=3-\frac{1}{1-x}=\frac{2-3x}{1-x}$, using the functional equation \eqref{gen} for $K=1$, we get, for $\alpha-\epsilon < x<\alpha$,
\begin{eqnarray*}
\tilde{W}_\alpha (x)&=&-\log x - x\log\left(\frac{x}{1-x}\right)+ (1-x) \tilde{W}_\alpha (\phi(x)).
\end{eqnarray*}
Therefore, in a neighbourhood $(\alpha-\epsilon, \alpha + \epsilon)$, we have
$$ \tilde{W}_\alpha (x)= h(x) + {\rm sgn}(\alpha-x)(1-x)\tilde{W}_\alpha(\phi(x)) \ \ \ \ \mbox{ with }\ h(x)= \begin{cases}
 -\log(1-x), & x>\alpha,\\
 -\log x - x\log(\frac{x}{1-x}), & x<\alpha. 
 \end{cases}$$ 

We note that 
\begin{itemize}
    \item $h(x) = O(1)$ for $x\to \alpha$, hence it does not change the kind of singularity of $\tilde{W}_\alpha$ at $\alpha$; 
    \item by Lemma \ref{coc}, the singularity of the function $g(x):= (1-x)\tilde{W}_\alpha(\phi(x))$ at $\alpha$ is of the same kind as the singularity of 
    $\tilde{W}_\alpha$ at $\phi(\alpha)$;
    \item by Lemma \ref{coc}, the function $\tilde{W}_\alpha$ has a type (B) singularity at $\xi=\phi(\alpha)$. This is trivial if $\alpha=2/3$ (since $\phi(\alpha)=0$), and in the other cases, it can be easily seen using the functional equation \eqref{gen} with the smallest integer $K$ such that $T_\alpha^K(\xi)=0$ (indeed,  $T_\alpha^K(x)$ will be a smooth fractional transformation for $x$  in a neighbourhood of $\xi=\phi(\alpha)$);
    \item the function ${\rm sgn}(\alpha-x)g(x)$ has a type (A) singularity at $\alpha$.
\end{itemize}
Therefore we can conclude that $\tilde{W}_\alpha$ has a type (A) singularity at $\alpha$, and is not BMO by Lemma \ref{BMOfailure}.
The same argument applies to the other values in the interval $(g,1]$, the only thing that can change is the analytic form of the fractional transformation $\phi$.

\begin{proof}[Proof of Theorem \ref{optimal}]
The claim (i) is an immediate consequence of the above discussion, together with Lemma \ref{BMOfailure}. As  for the claim (ii), we already observed that each of the rational values
$u_m = [0;2,1^{2m-1}]$  is the pseudocenter of a matching interval of index $+1$, 
and $u_m \to 1-g$ as $m \to +\infty$, and by Theorem ~\ref{singclass}-(ii)?) and Lemma \ref{BMOfailure}, we get that $\tilde{W}_{u_m} \notin \mathrm{BMO}$ for all $m\in \N$, and this concludes the proof  of Theorem \ref{optimal}.
\end{proof}

\subsection{Technical proofs }
In this section, we present technical proofs of Lemma~\ref{BMOfailure}, Lemma~\ref{coc} and Theorem~\ref{singclass}.
\begin{proof}[Proof of Lemma \ref{BMOfailure}]
WLOG, we may assume that $w$ satisfies Condition ($A_+$). Moreover, we may also assume that $\varepsilon>0$ is such  that $(\xi-\varepsilon,\xi+\varepsilon)\subset (a,b)$ and that $\int_{\xi-\varepsilon}^{\xi} w(t)dt <0$ and $\int_{\xi}^{\xi+\varepsilon} w(t)dt>0$. 
  
  Let us consider $I(\varepsilon):=\int_{\xi-\varepsilon}^{\xi+\varepsilon} w(t)dt$, if $I(\epsilon)=0$, then assertion (i) holds; if not, let us assume $I(\varepsilon)>0$, and consider the function $g(x):=\int_{\xi-\epsilon}^{\xi+x} w(t) dt$: $g$ is  continuous on $[0,\epsilon]$, $g(0)<0$ and $g(\varepsilon)>0$, so by the intermediate value theorem, there is $x^+\in (0,\varepsilon)$ such that $g(x^+)=\int_{\xi-\varepsilon}^{x^+} w(t) dt=0$. An analogous argument works if $I(\varepsilon)<0$, and this concludes the proof of (i).

 To prove that $w\notin \mathrm{BMO}$, let $M$ be any constant; we can choose $\varepsilon>0$ so that
 $$\frac{1}{|x|}\int_\xi^{\xi +x} w(t) dt\geq M, \ \ \ \ \forall |x|<\varepsilon.$$
 Let us also fix $x^+\in (\xi,\xi+\varepsilon)$ and $x^-\in (\xi-\varepsilon,\xi)$ such that $\int_{x^-}^{x^+}w(t)dt=0$, and let us estimate the quantity
 $\frac{1}{|I|}\int_I |w(t)-w_I|dt$, where $I=[x^-,x^+]$ and $w_I=\frac{1}{|I|}\int_I w(t) dt =0$:
 $$\frac{1}{x^+-x^-} \int_{x^-}^{x^+} |w(t)|dt \geq \frac{1}{x^+-x^-}[M(\xi-x^-)+M(x^+-\xi)] = M.$$
 This ends the proof of (ii).
\end{proof}

\begin{proof}[Proof of Lemma \ref{coc}] Let us first consider the case $\phi(x)=x$; let us set $W(x):=\int_\xi^{\xi +x} w(t) dt$, and note that since $w\in L^1$, the function $W$ is continuous. Hence, integrating by parts, we get
$$  \int_\xi^{\xi +x} \beta(t) w(t) dt= W(\xi+x)\beta(\xi+x)-\int_\xi^{\xi +x} W(t)\beta'(t) dt.$$
Note that the second term in the righthand side is $O(x) $ as $x\to 0$. Thus, if $w$ has a type A singularity at $\xi$, we get that
$$\frac{1}{|x|}\int_\xi^{\xi +x} \beta(t)w(t) dt=\frac{1}{|x|}W(\xi+x)\beta(\xi+x)+ O(1) \ \ \ \mbox{ for } x\to 0\text{;}     $$
hence, the product $\beta w$ also has a type A singularity at $\xi$. The case of type B singularity goes through in the very same way.

To handle the general case, it is enough to use a change of coordinates; setting $s=\phi(t)$ and $\psi:=\phi^{-1}$, we have
$$  \int_\xi^{\xi +x} \beta(t) w(\phi(t)) dt=\int_{\phi(\xi)}^{\phi(\xi +x)} \beta(\psi(s)) w(s)  \frac{1}{\phi'(\psi(s))} ds,
$$
and we get our claim using the previous point, with $\Tilde{\beta}(s):=\frac{\beta(\psi(s))}{\phi'(\psi(s))} $ instead of $\beta$.
\end{proof}

\begin{proof}[Proof of Theorem \ref{singclass}] Let us first prove claim (i); we shall consider $0<\alpha<1$ (otherwise, the hypotheses are not met).
Since $\xi\in \Q$, there is a smallest $k_0 \in \N$ such that $T_\alpha^{k_0+1}(\xi)=0$; so, by the functional equation \eqref{gen}, we get
$$ \tilde{W}_\alpha(x)=\tilde{W}_\alpha^{(k_0)}(x) +(-1)^{k_0+1}\tilde\beta_{\alpha,k_0}(x)\tilde{W}_\alpha(T_\alpha^{k_0+1}(x))  \  \mbox{ with }\ \tilde{W}_\alpha^{(k_0)}(x)=\sum_{k=0}^{k_0}(-1)^k \tilde\beta_{\alpha,k-1}(x) \log \frac{1}{|T_\alpha^k(x)|}.$$

Since 
$ T_\alpha^k(\xi)\notin \{ \alpha,\alpha -1\}$ for $k\leq k_0$, we get that, in a neighbourhood of $\xi$,  $T_\alpha^{k_0+1}$ coincides with a fractional transformation $\phi$; for the same reason the term $\tilde{W}_\alpha^{(k_0)}$ is smooth in a neighbourhood of $\xi$ (hence it's irrelevant for the type of singularity); therefore by Lemma \ref{coc} the singularity in $\xi$ is of the same type as in $\phi(\xi)=0$, namely it is type B.

\medskip

For the proof of claim (ii), let us first point out that the hypotheses of (i) are met whenever $\alpha \notin \Q$ or when $\alpha\in \Q$ but $\mathrm{den}(\alpha)>\mathrm{den}(\xi)$, where ``den'' denotes the denominator of a rational number. Thus if we are in case (ii), we have that $\alpha \in \Q$ and den$(\alpha)\leq$den$(\xi)$.

If $\xi \notin \{\alpha-1, \alpha\}$, then there is $k_0\geq 1 $ such that $T_\alpha^{k_0}(\xi)=\alpha-1$ and  we can write
$$\tilde{W}_\alpha(x)=\tilde{W}_\alpha^{(k_0-1)}(x)+(-1)^{k_0}\beta_{k_0-1}(x)\tilde{W}_\alpha(T_\alpha ^{k_0}(x)).$$
However, since $T^k_\alpha(\xi)\notin \{\alpha, \alpha-1\}$ for $k<k_0$, $T_\alpha^{k_0}$ coincides with a fractional transformation $\phi$ on a neighbourhood of $\xi$, hence by Lemma \ref{coc}, $\xi$ and $\alpha-1$ are singularities of the same type for $T_\alpha$.

Therefore to prove claim (ii), it is enough to show that, for $\alpha\in \Q$,  $\alpha$ is of type A if and only if $\alpha$ belongs to a matching interval $J$ of odd index.
 To prove this, we use equation \eqref{localal} to express $W_\alpha$ near $\alpha$:
$$ W_\alpha(x)=h(x)+{\mathrm{sgn}(\alpha-x)}^{n-m}\beta(x) W_\alpha(\phi(x)),$$
where $h\in L^\infty$ on a neighbourhood of $\alpha$ and $\beta,\phi$ are smooth near $\alpha$.
 We first point out that  $\xi:=\phi(\alpha)$ is a type B singularity, since den($\xi)<$den$(\alpha)$, and hence falls in case (i) we just treated above.
  Then, by Lemma~\ref{coc}, the expression $\beta(x)\tilde{W}_\alpha(\phi(x))$ has a type B singularity in $\alpha$, hence we deduce
 that if $n-m$ is even, then the singularity of $\tilde{W}_\alpha$ at $\alpha$ is of type B as well, while if $n-m$ is odd, then it becomes of type A.
Since $h$ is bounded, this ends the proof of claim (ii).
\end{proof}

\section{BMO property for $\alpha\in [1-g,g]$}\label{quater} 

The BMO property of $W_\alpha$ for $\alpha\in[\frac12,g]$ was proven in \cite[Theorem 2.3]{LeMar_24}.
In this section, we focus on $W_\alpha$ for $\alpha\in[1-g,\frac12]$.
Since BMO property is preserved when summing with an $L^\infty$ function, it suffices to prove the following theorem to establish Theorem~\ref{main}.
\begin{thm}\label{th:W_al.unif.bdd}
For $\alpha\in[1-g,\frac 12]$, $W_\alpha-W_{1/2}$ is uniformly bounded.
\end{thm}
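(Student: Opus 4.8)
The plan is to compare the two Wilton functions $\tilde{W}_\alpha$ and $\tilde{W}_{1/2}$ via their defining series \eqref{Twilton} and to show that, after taking the difference, the logarithmically divergent contributions cancel and one is left with a uniformly bounded remainder. More precisely, for $\alpha\in[1-g,\tfrac12]$ the interval $(1-g,g)$ is covered (modulo the exceptional zero-measure set $\mathcal{E}$) by matching intervals \emph{all of index $0$}, as recalled in the introduction; the value $\alpha=1/2$ itself lies in such an interval, and the key structural fact I would exploit is that index-$0$ matching means $m=n$, so the orbits of the two endpoints $\alpha$ and $\alpha-1$ synchronise after the \emph{same} number of steps: $T_\alpha^{n+1}(\alpha)=T_\alpha^{n+1}(\alpha-1)$. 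I would like to leverage Proposition \ref{prop}, which on the whole range $\sqrt2-1\le\alpha\le g$ gives the \emph{uniform} geometric bound $\beta_{\alpha,n}\le\bar\alpha\,\rho_\alpha^{\,n}$ with $\rho_\alpha=\sqrt2-1$, so that the tails of the Wilton series are controlled by a single constant independent of $\alpha$ in this subrange (note $[1-g,\tfrac12]\subset[\sqrt2-1,g]$ since $1-g=g^2>\sqrt2-1$).

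The concrete scheme I would carry out is the following. First, fix the natural common domain: both functions are $1$-periodic, $\tilde{W}_{1/2}=W_{1/2}$ is even, and by \eqref{twvsw} we have $\tilde{W}_\alpha(x)=W_\alpha(-x)$ for $\alpha<1/2$, so it suffices to estimate $W_\alpha-W_{1/2}$ on a single fundamental interval, say $[0,\tfrac12]$, where the two folded algorithms $A_\alpha$ and $A_{1/2}$ are compared directly. Second, I would show that for $x$ whose $\alpha$-expansion and $\tfrac12$-expansion eventually agree, the difference $W_\alpha(x)-W_{1/2}(x)$ reduces to a \emph{finite} sum of the initial partial-sum terms $W^{(K)}_\alpha(x)-W^{(K)}_{1/2}(x)$ plus a tail term of the shape $\pm\beta_{\cdot,K}(x)\bigl(\tilde{W}_\alpha(\zeta)-\tilde{W}_{1/2}(\zeta)\bigr)$ at a common synchronised point $\zeta$, using the functional equation \eqref{gen} for both maps and the matching identity. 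The bounded-density argument from the proof of Lemma \ref{Wzero} (where $0<m\le\rho_\alpha\le M$ and hence $\|\tilde{W}_\alpha\|_1$ is controlled) should deliver an $\alpha$-uniform bound on the tail, while the initial finite block of partial sums must be bounded uniformly in $x$ and in $\alpha$.

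The step I expect to be the main obstacle is precisely this last uniform control of the finitely many ``transient'' terms $\log(1/x_{\alpha,j})$ that appear before the two algorithms synchronise, because these are exactly the logarithmic singularities that blow up individually; the whole point is that the matching of index $0$ forces the singular term from $\tilde{W}_\alpha$ to be matched, at the same index, by the corresponding singular term of $\tilde{W}_{1/2}$, so that the differences $\log(1/|T_\alpha^j(\cdot)|)-\log(1/|T_{1/2}^j(\cdot)|)$ stay bounded even though each summand need not. Making this cancellation precise, and showing it is uniform in $\alpha$ across the whole range $[1-g,\tfrac12]$ rather than just at isolated matching parameters, is the delicate part: one has to track how the number of transient steps (which can grow as $\alpha$ approaches $1-g$, where the matching exponents of the nearby matching intervals tend to infinity) interacts with the uniform geometric decay $\rho_\alpha^{\,n}=(\sqrt2-1)^n$ of $\beta_{\alpha,n}$ supplied by Proposition \ref{prop}. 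I would argue that because the prefactor $\beta_{\alpha,j-1}$ decays geometrically at a rate bounded away from $1$ uniformly in $\alpha$, while the logarithmic differences grow at most polynomially, the transient block contributes a convergent, $\alpha$-uniformly bounded quantity, which closes the argument.
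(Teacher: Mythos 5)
There is a genuine gap, and it occurs at exactly the point you flag as ``the delicate part''. Your scheme rests on two premises that fail. First, the matching property you invoke concerns the orbits of the two \emph{endpoints} $\alpha$ and $\alpha-1$ under the single map $T_\alpha$; it tells you nothing directly about how the $\alpha$-expansion and the $\tfrac12$-expansion of the same point $x$ relate, which is what the theorem requires. Moreover the exceptional set $\mathcal{E}$ of \eqref{exceptional} has positive Hausdorff dimension inside $[1-g,g]$, so ``modulo $\mathcal{E}$'' leaves uncountably many parameters $\alpha$ uncovered, whereas the statement is for every $\alpha\in[1-g,\tfrac12]$. Second, the picture of a finite transient block followed by permanent agreement of the two expansions is false: for a.e.\ $x$ the $A_{1/2}$-orbit enters $(\alpha,\tfrac12]$ infinitely often, and each such visit de-synchronises the two expansions again. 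What is actually true --- and is the combinatorial heart of the paper's proof (Proposition~\ref{pr:q_i}) --- is that the tuple $(x_i,x_i',q_i,q_i')$ cycles forever through four explicit states with the quantitative control $|q_i-q_i'|\le q_{i-1}$ at \emph{every} index, the two expansions always remaining in lockstep (no index shift). Your argument never establishes this, and without it the claimed boundedness of $\log(1/|T_\alpha^j(x)|)-\log(1/|T_{1/2}^j(x)|)$ is unsupported. Two further problems: your tail term $\pm\beta_{\cdot,K}(x)\bigl(\tilde W_\alpha(\zeta)-\tilde W_{1/2}(\zeta)\bigr)$ cannot be bounded by the $L^1$ estimate from Lemma~\ref{Wzero}, since $\tilde W_\alpha(\zeta)$ is unbounded pointwise and bounding the difference at $\zeta$ is circular; and the inclusion $[1-g,\tfrac12]\subset[\sqrt2-1,g]$ is numerically wrong ($1-g\approx0.382<\sqrt2-1\approx0.414$), so on $[1-g,\sqrt2-1)$ Proposition~\ref{prop} gives $\rho_\alpha=\sqrt{1-2\alpha}$, which is still uniformly below $1$ but not the constant you quote.

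The paper's route avoids all of this by never confronting the individual logarithmic singularities: Proposition~\ref{pr:unif} replaces $W_\alpha$ by $\sum_{n}(-1)^n\frac{\log q_{\alpha,n+1}}{q_{\alpha,n}}$ up to a uniformly bounded error, and then the four-state analysis of Proposition~\ref{pr:q_i}, combined with Hurwitz's bound $q_i'/q_{i+1}'\le g$, shows that $\sum_n\bigl|\frac{\log q_{n+1}}{q_n}-\frac{\log q_{n+1}'}{q_n'}\bigr|$ converges absolutely and uniformly. Your underlying intuition --- geometric decay of $\beta_{\alpha,n}$ beating the discrepancy accumulated in each divergence block --- is essentially what appears in the paper's closing remark (formula \eqref{eq:diff}), but turning it into a proof requires iterating your decomposition over infinitely many re-synchronisation blocks and proving the state-transition structure first; as written, the proposal does not close.
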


The proof of the theorem follows the proof of \cite[Proposition 2.9]{LeMar_24}.
We provide a self-contained proof of the theorem to ensure readability.
To deduce that 
$B_\alpha(x)-\sum_{n=0}^\infty \frac{\log q_{\alpha,n+1}}{q_{\alpha,n}}$ is uniformly bounded in \cite[Theorem 8]{MoCaMa_99}, they proved 
$\sum_{n=0}^\infty\left|\beta_{\alpha,n-1}\log x_{\alpha,n}^{-1} -\frac{\log q_{\alpha,n+1}}{q_{\alpha,n}}\right|$
is uniformly bounded.
From this, we derive the following proposition which allows us to use the series $\sum_{n=0}^\infty(-1)^n\frac{\log q_{\alpha,n+1}}{q_{\alpha,n}}$ in place of $W_\alpha$.
\begin{pro}\label{pr:unif}
For $\alpha\in(0,1]$, $|W_\alpha(x) - \sum_{n=0}^\infty (-1)^n\frac{\log q_{\alpha,n+1}}{q_{\alpha,n}}|$ is uniformly bounded.
\end{pro}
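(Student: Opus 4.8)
The plan is to reduce the claim to a known uniform estimate from \cite{MoCaMa_99} by handling the alternating signs. Recall the definition
\[
W_\alpha(x)=\sum_{n=0}^\infty (-1)^n \beta_{\alpha,n-1}(x)\log\frac{1}{x_{\alpha,n}},
\]
so that the quantity we must bound, term by term, is the difference between $W_\alpha(x)$ and the alternating series $\sum_{n=0}^\infty (-1)^n \frac{\log q_{\alpha,n+1}}{q_{\alpha,n}}$. The natural move is to insert the individual terms and write
\[
\left| W_\alpha(x)-\sum_{n=0}^\infty (-1)^n\frac{\log q_{\alpha,n+1}}{q_{\alpha,n}}\right|
=\left| \sum_{n=0}^\infty (-1)^n\left(\beta_{\alpha,n-1}\log\frac{1}{x_{\alpha,n}}-\frac{\log q_{\alpha,n+1}}{q_{\alpha,n}}\right)\right|.
\]
The key observation is that inserting the absolute value inside the sum only increases the expression, so it suffices to bound $\sum_{n=0}^\infty \left|\beta_{\alpha,n-1}\log x_{\alpha,n}^{-1}-\frac{\log q_{\alpha,n+1}}{q_{\alpha,n}}\right|$, and this is precisely the quantity that \cite[Theorem 8]{MoCaMa_99} shows to be uniformly bounded (as noted in the paragraph preceding the statement). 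The alternating signs $(-1)^n$ are irrelevant once we pass to the triangle inequality, since $|(-1)^n|=1$.

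First I would state explicitly the term-by-term identity above, being careful about the index shift: the $n$th term of $W_\alpha$ pairs with $\beta_{\alpha,n-1}\log x_{\alpha,n}^{-1}$ (note the $\beta$ carries index $n-1$, with $\beta_{\alpha,-1}=1$), while the comparison series uses $\frac{\log q_{\alpha,n+1}}{q_{\alpha,n}}$. Then I would apply the triangle inequality to move the absolute value inside the sum, producing $\sum_{n=0}^\infty \left|\beta_{\alpha,n-1}\log x_{\alpha,n}^{-1}-\frac{\log q_{\alpha,n+1}}{q_{\alpha,n}}\right|$. Finally I would cite \cite[Theorem 8]{MoCaMa_99}, which guarantees this series is bounded by a constant independent of $x$ (and, one checks, uniformly in $\alpha$ on the relevant range, since the constants in \cite{MoCaMa_99} depend on $\alpha$ only through quantities controlled by Proposition \ref{prop}).

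I expect the only genuine subtlety to be verifying that the bound from \cite{MoCaMa_99} is \emph{uniform in $\alpha$} and not merely uniform in $x$ for each fixed $\alpha$; this is what the word ``uniformly'' in the statement must mean for the result to be useful in proving Theorem \ref{th:W_al.unif.bdd}. I would therefore inspect the estimates underlying \cite[Theorem 8]{MoCaMa_99}, checking that the exponential decay $\beta_{\alpha,n}\leq \bar\alpha\,\rho_\alpha^n$ and $1/q_{\alpha,n+1}<(1+\alpha)\bar\alpha\rho_\alpha^n$ from Proposition \ref{prop} give a geometric bound whose constants are controlled on the compact parameter range in question. This uniformity check is the main point to verify; the algebraic reduction itself via the triangle inequality is routine.
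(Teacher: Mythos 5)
Your proposal is correct and follows exactly the paper's own route: the authors likewise reduce the claim to the uniform boundedness of $\sum_{n=0}^\infty \bigl|\beta_{\alpha,n-1}\log x_{\alpha,n}^{-1}-\frac{\log q_{\alpha,n+1}}{q_{\alpha,n}}\bigr|$ established in \cite[Theorem 8]{MoCaMa_99}, the alternating signs being absorbed by the triangle inequality. Your added caution about uniformity in $\alpha$ is sensible (and harmless here, since the application only needs $\alpha$ in a compact set bounded away from $0$).
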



In this section, for the sake of brevity, let us denote the $A_\alpha$-orbits and the $A_{1/2}$-orbits by 
$x_k = A_\alpha^k(x_{\alpha,0})$ and $x_k' = A_{1/2}^k(x_{1/2,0})$ for any $x\in\mathbb R$, 
Accordingly, we denote the convergents and the partial quotients associated with $A_\alpha$ by $\frac{p_k}{q_k}$ and $(a_k,\epsilon_k)$, and those associated with $A_{1/2}$ by $\frac{p_k'}{q_k'}$ and $(a_k',\epsilon_k')$.
We recall some fundamental properties of $\alpha$-continued fractions.
\begin{rem}\label{re:fund}
\begin{enumerate}
\item 
For $x,x'\in(0,1]$ such that $A_1(x)=A_1(x')$, 
\begin{equation}\label{eq:A_al.coin}
\begin{cases}
A_\alpha(x)\le 1/2 & \text{ if and only if } A_\alpha(x)=A_{1/2}(x'),\\
A_\alpha(x)> 1/2 & \text{ if and only if } A_\alpha(x)+A_{1/2}(x')=1.
\end{cases}\end{equation}
\item 
For $\alpha\in(0,\frac{1}{2})$, we have
\begin{equation}\label{eq:eps}
\begin{cases}(a_k,\epsilon_k)=(a_k',\epsilon_k')&\text{if }x_k=x_k'\text{ and }x_{k-1}=x_{k-1}',\\
(a_k,\epsilon_k)=(a_k'+1,\epsilon_k') &\text{if }x_k=x_k'\text{ and }\frac{1}{x_{k-1}}-1=\frac{1}{x_{k-1}'},\\
a_k=a_k'+1,~ \epsilon_k=-1,~\epsilon_k'=1&\text{if }x_k=1-x_k'\text{ and }x_{k-1}=x_{k-1}',\\
a_k=a_k'+2,~ \epsilon_k=-1,~\epsilon_k'=1& \text{if }x_k=1-x_k'\text{ and }\frac{1}{x_{k-1}}-1=\frac{1}{x_{k-1}'}.
\end{cases}
\end{equation}
\item\label{it:re:fund3}
For $\alpha\in(0,1]$, we have $q_n>\frac{1}{C\cdot \lambda^n}$, where $C>0$ and $0<\lambda<1$. 
Combining with $x^{-1/2}\log x\le 2/e$ for $x>0$, the series
$$\sum_{n=0}^\infty\frac{1}{q_n}\quad \text{ and }\quad\sum_{n=0}^\infty\frac{\log q_{n}}{q_{n}}$$
are uniformly bounded, see \cite[Eq. (3.9) and Proof of Theorem 8]{MoCaMa_99} and also \cite[Remark 1.7]{MaMoYo_97} for details.
\end{enumerate}
\end{rem}

\begin{pro}\label{pr:q_i}
For $x\in \mathbb R$, a tuple $S_i:=(x_i,x_i',q_i,q_i')$ is in one of the following four states:
\begin{enumerate}
\item[\rm{(A)}] $x_i=x_i'$ and $q_i=q_i'$,
\item[\rm{(B)}] $x_i=1-x_i'$, $x_i'\in(\alpha,\frac12]$ and $q_i-q_i'=q_{i-1}$,
\item[\rm{(C)}] $\frac{1}{x_i}-1=\frac{1}{1-x_i'}$ and $q_i-q_i'=-q_{i-1}'$,
\item[\rm{(D)}] $\frac{1}{x_i}-1=\frac{1}{x_i'}$ and $q_i=q_i'$.
\end{enumerate}
Moreover, the states change according to the diagram in Figure~\ref{fi:diag}.

\begin{figure} \centering
\begin{tikzpicture}
\node at (0,0) {$(\mathrm{A})$};
\node at (5,0) {$(\mathrm{B})$};
\node at (5,-2) {$(\mathrm{C})$};
\node at (0,-2) {$(\mathrm{D})$};
\draw[->] (0.5,0)--(4.5,0);
\draw[->] (4.5,-2)--(0.5,-2);
\draw[->] (5,-0.5)--(5,-1.5);
\draw[->] (0,-1.5)--(0,-0.5);
\draw[<->] (0.5,-1.5)--(4.5,-0.5);
\draw[->] (0,0.4) arc (0:270:0.5);
\draw[->] (5.5,-1.9) arc (90:-180:0.5);
\end{tikzpicture}
\caption{Diagram of the four states in Proposition~\ref{pr:q_i}.}
\label{fi:diag}
\end{figure}
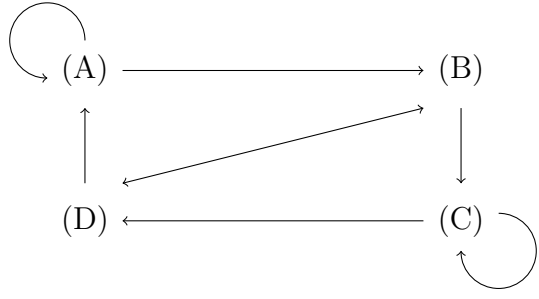
\end{pro}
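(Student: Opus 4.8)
The plan is to argue by induction on $i$, carrying along two coupled pieces of data: the relation between the orbit points $x_i$ and $x_i'$ (which is exactly one of the four relations in the statement) and a finer relation between the convergent denominators. For the latter it is convenient to record the bottom rows $v_i:=(q_{i-1},q_i)$ and $v_i':=(q_{i-1}',q_i')$ of the matrices in \eqref{matrix} and to prove the refined invariant $v_i=v_i'\,P$, where $P$ is one of the four integer matrices
$$P_{\mathrm A}=\begin{pmatrix}1&0\\0&1\end{pmatrix},\quad P_{\mathrm B}=\begin{pmatrix}1&1\\0&1\end{pmatrix},\quad P_{\mathrm C}=\begin{pmatrix}-2&-1\\1&1\end{pmatrix},\quad P_{\mathrm D}=\begin{pmatrix}-1&0\\1&1\end{pmatrix},$$
one for each state. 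A direct reading of the right entry of $v_i'P$ reproduces precisely the stated denominator identities ($q_i=q_i'$ for $(\mathrm A),(\mathrm D)$; $q_i-q_i'=q_{i-1}$ for $(\mathrm B)$; $q_i-q_i'=-q_{i-1}'$ for $(\mathrm C)$), so the proposition is the ``visible part'' of this invariant. By $1$-periodicity of both algorithms I may assume $x\in[0,1)$; then $v_0=v_0'=(0,1)$ and a short check of the initial digit shows that the starting state is $(\mathrm A)$ for $x\in(0,\tfrac12)\cup[1-\alpha,1)$ and $(\mathrm B)$ for $x\in[\tfrac12,1-\alpha)$.

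For the inductive step I would distinguish the two ``matched'' states $(\mathrm A),(\mathrm D)$, in which $A_1(x_i)=A_1(x_i')$, from the two ``mismatched'' states $(\mathrm B),(\mathrm C)$, in which $A_1(x_i)\neq A_1(x_i')$. In the matched case Remark~\ref{re:fund}(1) says exactly how $A_\alpha$ acts on $x_i$ and $A_{1/2}$ on $x_i'$, and \eqref{eq:eps} gives the relation between the new digits $(a_{i+1},\epsilon_{i+1})$ and $(a_{i+1}',\epsilon_{i+1}')$; the correct sub-case is selected by the position of the Gauss fractional part $\{1/x_i\}$ relative to the two thresholds $\alpha$ and $\tfrac12$, and this produces the self-loop and out-arrows at $(\mathrm A)$ and the arrows out of $(\mathrm D)$. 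In the mismatched states Remark~\ref{re:fund}(1) does not apply, so I would compute $A_\alpha(x_i)$ and $A_{1/2}(x_i')$ directly from their definitions and read off the new $x$-relation. This is exactly where the hypothesis $\alpha\ge 1-g$ is used: since $1-g=g^2>\tfrac13$, in state $(\mathrm B)$ one has $x_i'\in(\alpha,\tfrac12]\subset(\tfrac13,\tfrac12]$ and $x_i=1-x_i'\in[\tfrac12,g)$, which forces $[1/x_i']=2$ and $[1/x_i]=1$; only finitely many digit patterns survive, and the single comparison $x_i'\lessgtr\tfrac25$ separates the transitions $(\mathrm B)\to(\mathrm C)$ and $(\mathrm B)\to(\mathrm D)$ (and likewise for $(\mathrm C)$).

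Once the governing digits in a sub-case are fixed, the denominator update is automatic. Writing the digit matrices $G_i=\begin{pmatrix}0&\epsilon_i\\1&a_{i+1}\end{pmatrix}$ and $G_i'=\begin{pmatrix}0&\epsilon_i'\\1&a_{i+1}'\end{pmatrix}$ of \eqref{matrix}, one has $v_{i+1}=v_iG_i$ and $v_{i+1}'=v_i'G_i'$, so from $v_i=v_i'P$ we get $v_{i+1}=v_{i+1}'\bigl((G_i')^{-1}PG_i\bigr)$. It therefore suffices to verify, for each admissible pair $(G_i,G_i')$ delivered by the case analysis, that the conjugate $(G_i')^{-1}PG_i$ is again one of $P_{\mathrm A},\dots,P_{\mathrm D}$ and coincides with the matrix attached to the state dictated by the $x$-relation; these are a handful of explicit $2\times2$ integer products. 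Carrying these out realises precisely the arrows of Figure~\ref{fi:diag}.

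The step I expect to be the genuine obstacle is the denominator bookkeeping, and it is the reason for working with the vectors $v_i=v_i'P$ rather than with the scalar difference $q_i-q_i'$: the recurrence \eqref{qrecurrence} couples $q_i$, $q_{i-1}$ and $\epsilon_i$, so an induction that remembers only $q_i-q_i'$ does not close, whereas the vector relation simultaneously controls $q_{i-1}$, $q_{i-1}'$ and the signs. The remaining work is routine but must be handled: the boundary parameters ($x_i'=\tfrac12$, $x_i'=\tfrac25$, and rational $x$ for which an orbit reaches $0$) and the verification that the initial datum lands in $(\mathrm A)$ or $(\mathrm B)$; none of this affects the four-state structure.
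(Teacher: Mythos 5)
Your strategy is essentially the paper's (a coupled induction on the two orbits through the four states), but your execution of the two hard points is genuinely different, and it checks out. Your four matrices are correct: reading off $v_i'\,P_{\mathrm X}$ reproduces not only the stated identities for $q_i-q_i'$ but also the ``hidden'' first components ($q_{i-1}=q_{i-1}'$ in states $(\mathrm A)$, $(\mathrm B)$; $q_{i-1}=q_i'-2q_{i-1}'$ in $(\mathrm C)$; $q_{i-1}=q_i'-q_{i-1}'$ in $(\mathrm D)$), which agree with the paper's \eqref{eq:q_kq_k-1}, \eqref{eq:q'-q} and \eqref{eq:q_k+m-1}; and the conjugates $(G_i')^{-1}PG_i$ do land on the correct matrix for every arrow of Figure~\ref{fi:diag} (I verified all of them). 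The paper instead processes an entire excursion $(\mathrm B)\to(\mathrm C)\to\cdots\to(\mathrm C)\to(\mathrm D)\to(\mathrm A)/(\mathrm B)$ in one inductive step: it determines the length $m$ of the $(\mathrm C)$-chain from the partition $(\alpha,\tfrac12]=\sqcup_m(t_m,t_{m-1}]$ with $t_m=[0;2,1^{2m}]$, reads off all digits of the excursion at once, propagates the state-$(\mathrm C)$ relation via the commutation $f\circ g=g\circ f$ with $f(x)=3-1/x$, $g(x)=\frac{2x-1}{x-1}$, and tracks the denominators by scalar recurrences. Your single-step version buys cleaner, mechanical bookkeeping (your observation that the scalar difference $q_i-q_i'$ alone does not close is exactly right, and the vector invariant fixes it), at the price of needing a slightly larger inductive hypothesis than the proposition states.

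That enlargement is the one point you must make explicit, because otherwise the step out of state $(\mathrm C)$ does not close. The relation $\frac{1}{x_i}-1=\frac{1}{1-x_i'}$ together with $x_i'\in[0,\tfrac12]$ only yields $x_i\in[\tfrac13,\tfrac12]$, and on that range $a_{i+1}=[1/x_i+1-\alpha]$ can be $2$ or $3$. You need the sharper invariant $x_i\in[\tfrac13,1-g)$ (equivalently $x_i'\in(1-g,\tfrac12]$): the upper bound is what forces $1/x_i>2+\alpha$ and hence $a_{i+1}=3$. It holds on entry from $(\mathrm B)$ because there $x_{i-1}'\le\tfrac25$ gives $x_{i-1}\ge\tfrac35$, so $x_i=2-1/x_{i-1}\in[\tfrac13,\frac{1-2\alpha}{1-\alpha})\subseteq[\tfrac13,1-g)$ using $\alpha\ge 1-g$; and it is preserved by $f$ since $1-g$ is the fixed point of $f$ and $f([\tfrac38,1-g))=[\tfrac13,1-g)$, the complementary range $x_i\in[\tfrac13,\tfrac38)$ being exactly the case $x_i'>\tfrac25$ where you exit to $(\mathrm D)$. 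Likewise each state must record the sign pair $(\epsilon_i,\epsilon_i')$ — equal in $(\mathrm A)$, $(-1,+1)$ in $(\mathrm B)$ and $(\mathrm D)$, $(-1,-1)$ in $(\mathrm C)$ — since $G_i$ involves $\epsilon_i$; this comes out of the same case analysis (cf. \eqref{eq:eps}) but it is part of the invariant, not a consequence of the four relations as stated. With those two additions your induction closes.
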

\begin{rem}\label{prioriBC}
    If $S_i$ is in state $\mathrm{(B)}$, then $x_i \geq 1/2$. If it is in state $\mathrm{(C),}$ then $x_i \geq 1/3$.
\end{rem}

\begin{proof}[Proof of Proposition~\ref{pr:q_i}]
We will show it inductively. Assume that the statement holds for $0\le i\le k$.
Note that $S_0$ can only be in state (A) or (B). 
Moreover, (C) and (D) occur only in the chain of (B)-(C)-(C)-$\cdots$-(C)-(D).
Thus, it is unnecessary to consider (C) and (D) independently.

($\mathrm{(A)}\rightarrow \mathrm{(A)}\text{ or }\mathrm{(B)}$):
Suppose that $S_k\in\mathrm{(A)}$.
Then $S_{k-1}\in\mathrm{(A)}$ or $S_{k-1}\in\mathrm{(D)}$. Thus $x_k=x_k'$, $q_k=q_k'$, $q_{k-1}=q_{k-1}'$ and,  
by \eqref{eq:eps}, $\epsilon_k=\epsilon_k'$.
\begin{enumerate}
\item If $x_{k+1}\le 1/2$, then $x_{k+1}=x_{k+1}'$ by \eqref{eq:A_al.coin}. From \eqref{eq:eps}, $a_{k+1}=a_{k+1}'$, which implies $q_{k+1}=q_{k+1}'$. Thus $S_{k+1}\in\mathrm{(A)}$.
\item
For the case $x_{k+1}>1/2$, with a similar argument above, $x_{k+1}=1-x_{k+1}'$, $x_{k+1}\in(\alpha,\frac{1}{2}]$ and $a_{k+1}=a_{k+1}'+1$, which implies
$q_{k+1}=a_{k+1}q_{k}+\epsilon_kq_{k-1}=q_{k+1}'+q_k$.
Thus $S_{k+1}\in\mathrm{(B)}$.
\end{enumerate}

($\mathrm{(B)}\rightarrow\mathrm{(C)}\rightarrow \cdots \mathrm{(C)}\rightarrow\mathrm{(D)}\rightarrow\mathrm{(A)}\text{ or }\mathrm{(B)}$): Suppose that $S_k\in\mathrm{(B)}$.
Then $S_{k-1}\in\mathrm{(A)}$ or $S_{k-1}\in\mathrm{(D)}$.
Thus we have $x_k=1-x_k'$, $x_k'\in(\alpha,\frac{1}{2}]$
and
\begin{equation}\label{eq:q_kq_k-1}
q_k-q_k'=q_{k-1}, ~q_{k-1}=q_{k-1}', ~\epsilon_k=-1,~\epsilon_k'=1.
\end{equation}

Let $u_m=[0;2,1^{2m-1}]$ and $t_m=[0;2,1^{2m}]$, where $u_0=0$.
Since $u_m\uparrow 1-g$ and $t_m\downarrow 1-g$, we have
$$\left(0,1-g\right) = \sqcup_{m=1}^\infty (u_{m-1},u_m]\text{ and }\left(1-g,\frac12\right] = \sqcup_{m=1}^\infty(t_m,t_{m-1}].$$
Since $x_k'\in(\alpha,\frac12]$, there exists $m$ such that $x_k'\in(t_m,t_{m-1}]$.
If $m=1$, i.e. $x_k'\in\left(\max\{\frac{2}{5},\alpha\},\frac{1}{2}\right]$ and $x_k\in \left[\frac12,\min\{\frac35,1-\alpha\}\right)$, then $x_{k+1}'=\frac{1}{x_{k+}'}-2$ with $(a_{k+1}',\epsilon_{k+1}')=(2,1)$, and $x_{k+1}=2-\frac{1}{x_k}$ with $(a_{k+1},\epsilon_{k+1})=(2,-1)$.
Thus $\frac{1}{x_{k+1}}-1=\frac{-x_k+1}{2x_k-1} = \frac{x_{k}'}{1-2x_k'}=\frac{1}{x_{k+1}'}$.
By \eqref{eq:q_kq_k-1},
$q_{k+1} = 2q_k-q_{k-1}= 2q_k'+q_{k-1}'=q_{k+1}'$.
It means that $S_{k+1}\in\mathrm{(D)}$.

For $m>1$, we will show that 
\begin{equation}\label{eq:S_k+j}
S_{k+j}\in\mathrm{(C)}\text{ for }j=1,\cdots,m-1,\text{ and }S_{k+m}\in\mathrm{(D)}.
\end{equation}

For brevity, we denote by $f(x):= 3-\frac{1}{x}$.
Then we have
$f(t_i)=t_{i-1},~f(u_i)=u_{i-1}\text{ and }u_i = 2-\frac{1}{1-t_i}$ for all $i$.
Thus we have
\begin{equation}\label{eq:x_i}
\begin{cases}
x_{k+j}'=f(x_{k+j-1}') & \text{ and }(a_{k+j}',\epsilon_{k+j}')=(3,-1) \quad\text{ for }j=1,\cdots,m-1, \\
x_{k+m}'=\frac{1}{x_{k+m-1}'}-2& \text{ and }(a_{k+m}',\epsilon_{k+m}')=(2,1),\\
x_{k+1}=2-\frac{1}{x_k}, & \text{ and } (a_{k+1},\epsilon_{k+1})=(2,-1),\\
x_{k+j}=f(x_{k+j-1}) & \text{ and } (a_{k+j},\epsilon_{k+j})=(3,-1)\quad\text{ for }j=2,\cdots,m.
\end{cases}
\end{equation}

Let $g(x) = \frac{2x-1}{x-1}$. Note that $\frac{1}{x_i}-1=\frac{1}{1-x_i'}$ is equivalent to $x_i'=g(x_i)$.
We have 
$$x_{k+1}' = f(x_k') = f\left(1-\frac{1}{2-x_{k+1}}\right)= \frac{2x_{k+1}-1}{x_{k+1}-1}=g(x_{k+1}).$$ 
Since $f\circ g(x) = \frac{5x-2}{2x-1}=g\circ f(x)$,
the relation $x_{k+j-1}'=g(x_{k+j-1})$ implies that
$$x_{k+j}'=f(x_{k+j-1}')=f\circ g(x_{k+j-1}) = g\circ f(x_{k+j-1})=g(x_{k+j})\quad\text{ for }j=2,\cdots,m-1.$$
From above, for $j=m$, we have
$$x_{k+m}'=\frac{1}{x_{k+m-1}'}-2=\frac{1}{g(x_{k+m-1})}-2=
1-f(g(x_{k+m-1}))
=\frac{x_{k+m}}{1-x_{k+m}}.$$

On the other hand, combining \eqref{eq:x_i} and \eqref{eq:q_kq_k-1} with a recurrence relation of $q_i$, we have
\begin{equation}\label{eq:q'-q}\begin{cases}
q_{k+1}'-q_{k+1} & =3q_k'-2q_k+2q_{k-1}=q_k',\\
q_{k+j}'-q_{k+j} & = 3(q_{k+j-1}'-q_{k+j-1})-(q_{k+j-2}'-q_{k+j-2}) =q_{k+j-1}' \text{ for }2\le j\le m-1,\\
q_{k+m}'-q_{k+m} & =3(q_{k+m-1}'-q_{k+m-1})-(q_{k+m-2}'-q_{k+m-2})-q_{k+m-1}'=0,
\end{cases}\end{equation}
inductively.
Thus \eqref{eq:S_k+j} holds.


We will now show that $S_{k+m+1}\in\mathrm{(A)}$ or $\mathrm{(B)}$.
First, from $a_{k+m}'=2,~\epsilon'_{k+m-1}=-1$ as in \eqref{eq:x_i}, $q_{k+m}'-q_{k+m-1}'=q_{k+m-1}'-q_{k+m-2}'$. By \eqref{eq:q'-q}, we have 
\begin{equation}\label{eq:q_k+m-1}
q_{k+m}'-q_{k+m-1}'-q_{k+m-1}=0.
\end{equation}
 By $\frac{1}{x_{k+m}}=\frac{1}{x_{k+m}'}-1$, we have $A_1(x_{k+m})=A_1(x_{k+m}')$.
\begin{enumerate}
\item
If $A_{\alpha}(x_{k+m}')\le 1/2$, then $A_{1/2}(x_{k+m}')=A_\alpha(x_{k+m})$ by \eqref{eq:A_al.coin}.
By \eqref{eq:eps}, $a_{k+m+1}=a_{k+m+1}'+1$.
Recall that $q_{k+m}=q_{k+m}'$ in \eqref{eq:q'-q} and $\epsilon_{k+m}=-1$ and $\epsilon_{k+m}'=1$ in \eqref{eq:x_i}.
Combining with \eqref{eq:q_k+m-1}, we have
$q_{k+m+1}-q_{k+m+1}' = q_{k+m} - q_{k+m-1}- q_{k+m-1}'=0.$
Thus $S_{k+m+1}\in\mathrm{(A)}$.
\item
If $A_\alpha(x_{k+m}')>1/2$, then 
$A_{1/2}(x_{k+m}')= 1-A_{\alpha}(x_{k+m})\in(\alpha,\frac{1}{2})$ and
$a_{k+m+1}=a_{k+m+1}'+2$, thus
$q_{k+m+1}-q_{k+m+1}'=2q_{k+m}-q_{k+m-1}-q_{k+m-1}'=q_{k+m}'$
with a similar argument above.
Therefore, $S_{k+m+1}\in\mathrm{(B)}$.
\end{enumerate}
\end{proof}

\begin{proof}[Proof of Theorem~\ref{th:W_al.unif.bdd}]
The difference $W_\alpha-W_{1/2}$ is $\mathbb Z$-periodic and symmetric on $(0,\alpha)$.
By Proposition~\ref{pr:unif},
it is enough to show that  $\sum_{n=0}^\infty \left|\frac{\log q_{n+1}}{q_n}-\frac{\log q_{n+1}'}{q_n'}\right|$ is uniformly bounded for $x\in[0,1-\alpha]$.
We have
$$\frac{\log q_{n+1}}{q_n}-\frac{\log q_{n+1}'}{q_n'} = \frac{1}{q_n}\log\frac{q_{n+1}}{q_{n+1}'}+\left(\frac{1}{q_n}-\frac{1}{q_{n}'}\right)
\log(q_{n+1}').$$

By using the recurrence relation of $q_i'$, Hurwitz proved that $\frac{q_i'}{q_{i+1}'}\le g$ for all $i$ in \cite[\S 3]{Hur_89},
see also \cite[Satz 5.18 (B) in \S 43]{Pe_54} and \cite[p. 421]{Ja_85}.
By Proposition~\ref{pr:q_i}, $|q_i-q_i'|\le q_{i-1}$ for all $i$.
Thus we have
$$1-g \le \frac{-q_n'+q_{n+1}'}{q_{n+1}'}\le \frac{q_{n+1}}{q_{n+1}'}\le \frac{q_n'+q_{n+1}'}{q_{n+1}'}\le 2,$$
which implies that 
\begin{equation}\label{eq:log}\left|\log \frac{q_{n+1}}{q_{n+1}'}\right|\le \max\left\{\log 2,\log \frac{1}{1-g}\right\}=\log (g+2).
\end{equation}
In the proof of Proposition~\ref{pr:q_i}, we saw that if $|q_n'-q_n|=q_{n-1}'$, then $a_{n+1}'=2$ or $3$, see \eqref{eq:x_i}.
Thus $q_{n+1}'\le 3q_n'+q_{n-1}'\le 4q_n'$.
Then we have
\begin{align}\label{eq:1/q}
\left|\frac{1}{q_n}-\frac{1}{q_n'}\right| 
 =\frac{|q_n'-q_n|}{q_nq_n'}\le \frac{q_{n-1}'}{(q_n'-q_{n-1}')q_n'}=\frac{1}{\left(\frac{q_n'}{q_{n-1}'}-1\right)q_n'}
 \le \frac{1}{\left(\frac{1}{g}-1\right)q_n'}\le\frac{4}{gq_{n+1}'}.
\end{align}
By \eqref{eq:log}, \eqref{eq:1/q} and Remark~\ref{re:fund}-\eqref{it:re:fund3},
$$\sum_{n=0}^\infty \left|\frac{\log q_{n+1}}{q_n}-\frac{\log q_{n+1}'}{q_n'}\right|\le \sum_{n=0}^\infty\frac{\log(g+2)}{q_n}+\sum_{n=0}^\infty\frac{4}{g}\cdot\frac{\log q_{n+1}'}{q_{n+1}'}$$
is uniformly bounded.
\end{proof}

Let us remark that for $\alpha$ near $1/2$, we can provide a much more precise estimate for the difference $W_\alpha - W_{1/2}$ exploiting the matching phenomenon: we give here an explicit example of the difference $W_{2/5}-W_{1/2}$, which has the following graphs.
\begin{figure}[h!]
    \centering
    \includegraphics[width=0.8\linewidth]{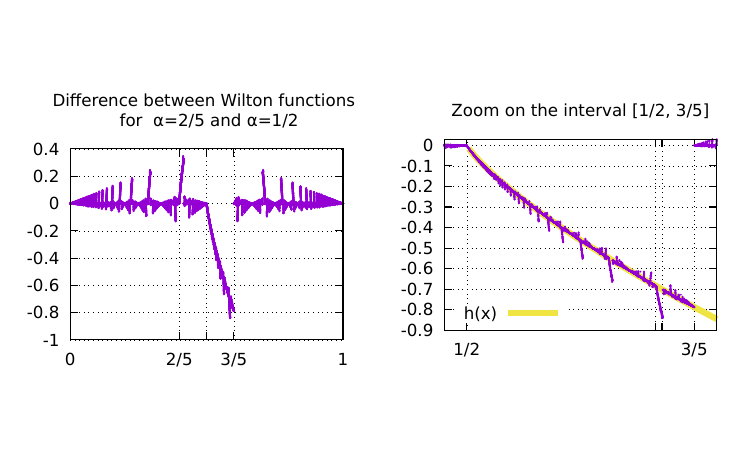}
    \caption{$W_\alpha - W_{1/2}$ for $\alpha=2/5$}
    \label{fig:difference}
\end{figure}

To interpret these pictures, let us first focus on some $x\in (1/2, 3/5)$ (see Figure~\ref{fig:difference}, right), so that we can use the classical regular continued fraction and write $x=[0;1,1,a+y]$ with $a\in \mathbb{N}$, $y\in (0,1)$ and $a+y>2$.
We first observe that, letting $x_0= x_{0,2/5} $ and $x'=x_{0,1/2}$, we get
$$
\begin{array}{ccc}
x_{0}=x=[0;1,1,a+y], & x_1=A_{2/5}(x)=1-A_1(x)=[0;a+1+y],     & A_1(x_1)=y;  \\
x'_{0}=1-x=[0;2,a+y], & x'_1=A_{1/2}(1-x)=A_1(1-x)=[0;a+y],     & A_1(x'_1)=y.
\end{array}
$$
Here we see that the orbits follow the diagram in Figure~\ref{fi:diag}.
More precisely, we start from state (B), and pass directly to state (D); after that we can either end up in state (A) or (B). 
Using the functional equation \eqref{gen}, we get
$$
\begin{array}{rcl}
W_{2/5}(x)     &=& -\log x - x\log \dfrac{x}{2x-1}+(2x-1)W_{2/5}(x_2),  \\
 W_{1/2}(x)=W_{1/2}(1-x)    &=& -\log (1-x) -(1-x)\log\dfrac{1-x}{2x-1} +(2x-1)W_{1/2}(x'_2).
\end{array}
$$
Therefore the difference can be written as
\begin{eqnarray}
   W_{2/5}(x)-  W_{1/2}(x)&=& h(x) + (2x-1)[W_{2/5}(x_2)-  W_{1/2}(x'_2)]  \ \ \ \mbox{ with } \\
      h(x) &=& -\log x - x\log \frac{x}{2x-1} 
   +\log (1-x) +(1-x)\log\frac{1-x}{2x-1},
\end{eqnarray}
 where either $x_2=x'_2$ (state (A)) or $x_2=1-x'_2$ (state (B)); in either case, one has that $\beta_1=\beta_1'.$

Note that $h$ is the function plotted with a yellow thick line in Figure~\ref{fig:difference}, which  follows the graph of the difference quite closely. Indeed, this can be explained easily: if we denote by $\tilde{B}=\{k \in \mathbb{N} \ \ : \ \ (x_k,x'_k) \  \mbox{is in state (B)}\}$, then we easily realize that
\begin{equation}\label{eq:diff}
    W_{2/5}(x)-  W_{1/2}(x)=\sum_{k\in \tilde{B}} \beta_{k-1}h(x_k).
\end{equation}

This formula holds in general, and explains the structure of the graph well; for instance, the part of the graph where $W_{2/5}(x)-  W_{1/2}(x)$ closely shadows $h(x)$ corresponds to a point for which $(x_k,x'_k)$ stays in state (A) for quite a few iterations, while intervals where the graph of the difference parts from that of $h$ corresponds to quick returns to state (B) (the largest ``hair'' shooting off the graph of $h$ for $x\in (7/12,18/31)$ corresponds to the case $x_2=1-x'_2=y$ with $y\in (1/2,3/5)$): namely a transition (D) to (B) without passing through state (A). One can use \eqref{eq:diff} together with the estimate of Proposition~\ref{prop} to prove rigorously that $\|W_{2/5}-  W_{1/2}\|_\infty < 1$ (which can be guessed from Figure~\ref{fig:difference}).

\section{Appendix}\label{apx}
In Section~\ref{tre}, we saw how the matching condition, whose relevance was first understood in connection with the study of the entropy of $\alpha$-continued fractions,  plays a key role in the mechanism leading to the failure of the BMO property. Actually, the techniques used in Section~\ref{quater} have the same flavour, even if the matching property is never explicitly mentioned. In fact, we can also use the intermediate results in Section~\ref{quater} to recover a very simple proof of the following non trivial fact:
\begin{pro}\label{pr:ent}
    The metric entropy of $A_\alpha$ is constant for $\alpha \in [1-g,g].$
\end{pro}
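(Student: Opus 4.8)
The plan is to derive the constancy of entropy from the intermediate structural results of Section~\ref{quater}, in the same spirit in which the BMO property was proven, using Rohlin's formula $\mathrm{h}(A_\alpha)=\int \log|A_\alpha'(x)|\,d\mu_\alpha(x)$ together with the state diagram of Proposition~\ref{pr:q_i}. The key observation is that for $\alpha\in[1-g,g]$ the entropy should coincide with $\mathrm{h}(A_{1/2})$, so it suffices to prove $\mathrm{h}(A_\alpha)=\mathrm{h}(A_{1/2})$ for $\alpha\in[1-g,\frac12]$ (the interval $[\frac12,g]$ being already covered by known results, e.g. \cite{NaNa_08,CT12}). Since $\log|A_\alpha'(x)|=-2\log|x|$ up to the appropriate folding, Rohlin's formula rewrites as $\mathrm{h}(A_\alpha)=-2\int\log|x|\,d\mu_\alpha(x)$, and by the Abramov/Kac type identity this Lyapunov exponent is controlled by the growth rate of the denominators $q_{\alpha,n}$.

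First I would invoke Rohlin's formula to express the entropy as a Lyapunov exponent and then recall the standard fact that this exponent equals $\lim_n \frac{1}{n}\log q_{\alpha,n}$ for $\mu_\alpha$-a.e.\ $x$ (this is the analogue of the L\'evy constant in the $\alpha$-setting; see \cite{NaNa_08}). Thus proving constancy of entropy reduces to showing that the a.e.\ growth rate of $q_{\alpha,n}$ does not depend on $\alpha\in[1-g,\frac12]$. At this point I would exploit Proposition~\ref{pr:q_i}: the comparison between the $A_\alpha$-orbit and the $A_{1/2}$-orbit shows that at every step the two convergent denominators satisfy $|q_n-q_n'|\le q_{n-1}$, and in fact they agree except during the excursions through states $\mathrm{(B)}$--$\mathrm{(C)}$--$\cdots$--$\mathrm{(D)}$. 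Since $\frac{q_n'}{q_{n-1}'}\ge \frac{1}{g}$ (Hurwitz), the bound $|q_n-q_n'|\le q_{n-1}\le g\,q_n'$ gives $1-g\le q_n/q_n'\le 1+g$ uniformly in $n$ and in $x$, exactly as in \eqref{eq:log}. This two-sided comparison forces $\lim_n\frac1n\log q_{\alpha,n}=\lim_n\frac1n\log q_{1/2,n}'$ for a.e.\ $x$, whence the Lyapunov exponents, and therefore the entropies, coincide.

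The final step would be to convert the pointwise a.e.\ statement into the equality of the integrated quantity, i.e.\ of the entropy itself. Here I would note that $\frac{1}{n}\log q_{\alpha,n}\to \mathrm{h}(A_\alpha)/2$ is an a.e.\ and $L^1$ convergence (uniformly integrable because of the a.e.\ bounds above combined with the uniform control of the densities $m\le\rho_\alpha\le M$ used already in Lemma~\ref{Wzero}), so the constancy of the limit upgrades to constancy of $\mathrm{h}(A_\alpha)$. The main obstacle I anticipate is precisely the passage from the orbit-by-orbit comparison of Proposition~\ref{pr:q_i} to a genuine statement about the invariant measures $\mu_\alpha$: one must ensure that the excursions into states $\mathrm{(B)},\mathrm{(C)},\mathrm{(D)}$ do not asymptotically distort the growth rate, which is guaranteed by the uniform multiplicative two-sided bound $1-g\le q_{\alpha,n}/q_{1/2,n}'\le 1+g$ established above but requires care to phrase cleanly in terms of the a.e.\ Lyapunov exponent rather than a single orbit. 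Once that uniform comparison is in place, the conclusion is immediate and the proof is indeed ``very simple'' as promised.
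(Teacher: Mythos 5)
Your proposal is correct, but it reaches the conclusion by a slightly different route than the paper. Both arguments rest on the same two pillars: the formula $h_{\mu_\alpha}(A_\alpha)=2\lim_k\frac1k\log q_{\alpha,k}(x)$ for a.e.\ $x$ (from \cite{NaNa_08}) and the state analysis of Proposition~\ref{pr:q_i}. Where you differ is in how you extract equality of the growth rates: you use the \emph{uniform two-sided multiplicative bound} $1-g\le q_n/q_n'\le 1+g$ valid for \emph{every} $n$ (exactly the estimate \eqref{eq:log} already proved for Theorem~\ref{th:W_al.unif.bdd}), which immediately forces $\lim\frac1n\log q_n=\lim\frac1n\log q_n'$ for any point where either limit exists. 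The paper instead uses the exact equality $q_k=q_k'$, which by Remark~\ref{prioriBC} holds whenever $x_k\in(0,1/3)$ (states (B) and (C) force $x_k\ge 1/3$), and passes to the limit along the infinite subsequence of return times to $(0,1/3)$ of a typical orbit. Your version is arguably more economical since it recycles \eqref{eq:log} verbatim and needs no recurrence argument; the paper's version isolates the cleaner structural fact that the two denominator sequences actually \emph{coincide} infinitely often. Two remarks on streamlining: the detour through Rohlin's formula is not needed, since \cite{NaNa_08} already gives the entropy as twice the a.e.\ limit of $\frac1n\log q_{\alpha,n}$; and your worry about upgrading an a.e.\ statement to the ``integrated quantity'' is moot --- the entropy is a constant equal to that a.e.\ limit, so agreement of the pointwise limits at a single point typical for both $A_\alpha$ and $A_{1/2}$ (a full-measure condition) already yields $h_{\mu_\alpha}(A_\alpha)=h_{\mu_{1/2}}(A_{1/2})$, with no uniform integrability required.
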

For $\alpha \in [1/2,g]$, this result is known since the eighties (\cite{Na81}), but extending it to $[1-g,1/2]$ is much harder. Indeed the proof of this result\footnote{Actually the result of \cite{CT13} is for the unfolded algorithm $T_\alpha$, but it is not difficult to see that the entropy of $A_\alpha$ and $T_\alpha$ is the same for all $\alpha\in [0,1]$.}  given by \cite{CT13} is quite sophisticated, the reason being that the range $[1-g,1/2]$ is split into countably many matching intervals.

However, the results in the previous section provide a straightforward proof of the ``hard" case $\alpha \in [1-g,1/2]$.
\begin{proof}[Proof of Proposition~\ref{pr:ent}]
It is well known that for $\alpha>0$ the map $A_\alpha$ has an (unique) ergodic absolute continuous invariant probability measure $\mu_\alpha$, and for a.e. $x\in [0,\bar{\alpha}]$ the invariant measure $\mu_\alpha$ and the metric entropy  can be computed as follows:
$$\mu_\alpha([a,b])=\lim_{n\to +\infty}\frac{\sum_{k=0}^{n-1}\chi_{[a,b]} (A^k_\alpha(x))}{n} , \ \ \ \  h_{\mu_\alpha}(A_\alpha)= 2 \lim_{k\to +\infty}\frac{1}{k} \log q_{\alpha, k}(x),$$
see \cite[Proposition 1]{NaNa_08}.
We shall call {\it typical} a value for which both above formulas hold.

Let $\alpha \in [1-g,1/2]$ be fixed, and let  us pick $x_0\in (0,1/3)$ which is typical both for $A_{1/2}$ and $A_\alpha$; resuming the notation of the previous section, we set
$x_k = A_\alpha^k(x_{0})$, $x_k' = A_{1/2}^k(x_{0})$ and define $\frac{p_k}{q_k}$, $\frac{p_k'}{q_k'}$ as the convergents of $x_0$ associated with $A_{1/2}$ and $A_\alpha$, respectively. 

Since 
$x_0$ is typical then there is an infinite set $J$ of indices $k$ such that $x_k\in (0,1/3)$ for  $k\in J$, and thus by Remark \ref{prioriBC} the pair $(x_k,x'_k)$ is either in state (A) or (D) for all $k\in J$ and $q_k=q'_k$ for all $k\in J$.
Therefore,
$$
h_{\mu_\alpha}(A_\alpha)= 2 \lim_{k\to +\infty}\frac{1}{k} \log q_k =
2 \lim_{\substack{k\to +\infty, \\k\in J}}\frac{1}{k} \log q_k 
=2 \lim_{\substack{k\to +\infty, \\ k\in J}}\frac{1}{k} \log q'_k=2 \lim_{k\to +\infty}\frac{1}{k} \log q'_k =h_{\mu_\alpha}(A_{1/2}).
$$

\end{proof}


\Addresses

\end{document}